\documentclass[12pt]{article}

\usepackage[usenames]{color}
\usepackage{amssymb}
\usepackage{amsmath}
\usepackage{amsthm}
\usepackage{amsfonts}
\usepackage{amscd}
\usepackage{graphicx}

\usepackage[colorlinks=true,
linkcolor=webgreen,
filecolor=webbrown,
citecolor=webgreen]{hyperref}

\definecolor{webgreen}{rgb}{0,.5,0}
\definecolor{webbrown}{rgb}{.6,0,0}

\usepackage{color}
\usepackage{fullpage}
\usepackage{float}

\usepackage{graphics}
\usepackage{latexsym}
\usepackage{epsf}

\setlength{\textwidth}{6.5in}
\setlength{\oddsidemargin}{.1in}
\setlength{\evensidemargin}{.1in}
\setlength{\topmargin}{-.1in}
\setlength{\textheight}{8.4in}

\newcommand{\seqnum}[1]{\href{https://oeis.org/#1}{\rm \underline{#1}}}

\bibliographystyle{jis}

\newcommand{\Nset}{\mathbb{N}}
\newcommand{\Zset}{\mathbb{Z}}
\newcommand{\rb}[1]{_{\rm #1}}
\newcommand{\ch}[2]{\begin{bmatrix}#1\\ #2\end{bmatrix}}
\newcommand{\tch}[2]{[\begin{smallmatrix}#1\\ #2\end{smallmatrix}]}
\newcommand{\chb}[3]{\left\langle\begin{matrix}#1\\#2\end{matrix}\right\rangle_{\!\!\!#3}}
\newcommand{\tchb}[2]{\langle\begin{smallmatrix}#1\\#2\end{smallmatrix}\rangle}
\newcommand{\floor}[1]{\lfloor#1\rfloor}

\newlength{\defaultmultlinegap}
\setlength{\defaultmultlinegap}{\multlinegap}
\newcommand{\zmlg}{\setlength{\multlinegap}{0pt}}
\newcommand{\rmlg}{\setlength{\multlinegap}{\defaultmultlinegap}}

\begin{document}

\theoremstyle{plain}
\newtheorem{theorem}{Theorem}
\newtheorem{corollary}[theorem]{Corollary}
\newtheorem{lemma}[theorem]{Lemma}
\newtheorem{proposition}[theorem]{Proposition}
\newtheorem{identity}[theorem]{Identity}

\theoremstyle{definition}
\newtheorem{definition}[theorem]{Definition}
\newtheorem{example}[theorem]{Example}
\newtheorem{conjecture}[theorem]{Conjecture}

\theoremstyle{remark}
\newtheorem{remark}[theorem]{Remark}

\begin{center}
\vskip 1cm{\LARGE\bf
On a Two-Parameter Family of   
\vskip .08in
Generalizations of Pascal's Triangle}
\vskip 1cm
\large
Michael A. Allen\\
Physics Department\\
Faculty of Science\\
Mahidol University\\
Rama 6 Road\\
Bangkok 10400  \\
Thailand \\
\href{mailto:maa5652@gmail.com}{\tt maa5652@gmail.com} \\
\end{center}

\vskip .2 in

\begin{abstract}
We consider a two-parameter family of triangles whose $(n,k)$-th entry
(counting the initial entry as the $(0,0)$-th entry) is the number of
tilings of $N$-boards (which are linear arrays of $N$ unit square cells for any
nonnegative integer $N$) with unit squares and $(1,m-1;t)$-combs for
some fixed $m=1,2,\dots$ and $t=2,3,\dots$ that use $n$ tiles in total
of which $k$ are combs. A $(1,m-1;t)$-comb is a tile composed of $t$
unit square sub-tiles (referred to as teeth) placed so that each tooth
is separated from the next by a gap of width $m-1$. We show that the
entries in the triangle are coefficients of the product of two
consecutive generalized Fibonacci polynomials each raised to some
nonnegative integer power. We also present a bijection between the
tiling of an $(n+(t-1)m)$-board with $k$ $(1,m-1;t)$-combs with the
remaining cells filled with squares and the $k$-subsets of
$\{1,\ldots,n\}$ such that no two elements of the subset differ by a
multiple of $m$ up to $(t-1)m$. We can therefore give a combinatorial
proof of how the number of such $k$-subsets is related to the
coefficient of a polynomial.  We also derive a recursion relation for
the number of closed walks from a particular node on a class of
directed pseudographs and apply it obtain an identity concerning the
$m=2$, $t=5$ instance of the family of triangles.  Further identities
of the triangles are also established mostly via combinatorial proof.
\end{abstract}

\section{Introduction}

In a recent paper \cite{AE22}, that we will henceforth refer to as
AE22, we considered two one-parameter families of generalizations of
Pascal's triangle.  Regarding the triangles as lower triangular
matrices, the members of both families have ones in the leftmost
column and the repetition of 1 followed by $m-1$ zeros along the
leading diagonal, where $m$ is a positive integer. In the case of the
first family, the rest of the entries are obtained using Pascal's
recurrence, i.e.,
$\tbinom{n}{k}_m=\tbinom{n-1}{k}_m+\tbinom{n-1}{k-1}_m$, where
$\tbinom{n}{k}_m$ is the $(n,k)$-th entry (counting the first entry as
being in row $n=0$ and column $k=0$) of the $m$-th triangle of the
family.  We showed that this is equivalent to the triangles being
row-reversed $(1/(1-x^m),x/(1-x))$ Riordan arrays. A
\textit{$(p(x),q(x))$ Riordan array} is an infinite lower triangular
matrix whose $(n,k)$-th entry is the coefficient of $x^n$ in the
series expansion of $p(x)(q(x))^k$ \cite{SGWW91,Bar=16}.  The
row-reversed version of a Riordan array has the entries up to and
including the leading diagonal in each row placed in reverse order
\cite{AE22}.

The main focus of AE22 was on a second family of triangles whose
$(n,k)$-th entry (denoted by $\tchb{n}{k}_m$) is the number of ways to
tile $N$-boards (which are linear arrays of $N\geq0$ unit square
cells) using $k$ $(1,m-1)$-fences and $n-k$ squares (and thus $n$
tiles in total).  A \textit{$(1,m-1)$-fence} is a tile composed to two
unit-square sub-tiles separated by a gap of width $m-1$
\cite{Edw08,EA15}.  The two families of triangles coincide for $m=1,2$
and the $m=1$ case is Pascal's triangle, i.e.,
$\tbinom{n}{k}_1=\tchb{n}{k}_1=\tbinom{n}{k}$ and
$\tbinom{n}{k}_2=\tchb{n}{k}_2$ for all $n$ and $k$.  We showed that
for $j\ge0$, $k\ge0$, $m\ge1$, and $r=0,\ldots,m-1$, the entry
$\tchb{mj+r-k}{k}_m$ is the coefficient of $x^k$ in
$f_j^{m-r}(x)f_{j+1}^r(x)$, where in this instance the Fibonacci
polynomial $f_n(x)$ is defined by
$f_n(x)=f_{n-1}(x)+xf_{n-2}(x)+\delta_{n,0}$, $f_{n<0}(x)=0$, where
$\delta_{i,j}$ is 1 if $i=j$ and zero otherwise. By first identifying
a bijection between the tilings of an $(n+m)$-board with $k$
$(1,m-1)$-fences and $n+m-2k$ squares and the subsets of
$\Nset_n=\{1,\ldots,n\}$ containing $k$ elements none of which differ
from another element in the subset by $m$, we showed that the number
of such subsets, $S^{(m)}(n,k)=\tchb{n+m-k}{k}_m$. We thus arrived at
a combinatorial proof of the relation between $S^{(m)}(n,k)$ and the
coefficient of $x^k$ in the product of nonnegative integer powers of
two successive Fibonacci polynomials.

Here we generalize the second family of triangles by considering the
analogous $n$-tile tilings of $N$-boards with $(1,m-1;t)$-combs and
squares for positive integer $m$ and $t=2,3,\ldots$. A
\textit{$(w,g;t)$-comb} contains $t$ sub-tiles of dimensions
$w\times1$ (referred to as \textit{teeth}) separated from one another
by gaps of width $g$
\cite{AE-GenFibSqr}. A
$(1,m-1;2)$-comb is evidently a $(1,m-1)$-fence and so the $t=2$
instances of the triangles we introduce here coincide with the second family of
triangles in AE22.

After introducing the two-parameter family of triangles (along with a
less compact version of the triangles which is used in some proofs) in
\S\ref{s:fam}, we show how entries of the triangles are related to
some generalized Fibonacci polynomials in \S\ref{s:poly}. Then in
\S\ref{s:comb} we give a bijection between the tilings of an
$(n+(t-1)m)$-board with $k$ $(1,m-1;t)$-combs and $n+(t-1)m-kt$
squares and the $k$-subsets of $\Nset_n$ such that no two
elements of the subset differ by any element of the set
$\{m,2m,\ldots,(t-1)m\}$. This enables us to relate the number of such
subsets to coefficients of products of powers of two successive
generalized Fibonacci polynomials. The remainder of the paper concerns
finding identities satisfied by entries in the triangle. Most of the
identities are obtained via the enumeration of metatiles with a
certain length or number of tiles which can be problematic if the
metatiles contain an arbitrary number of tiles. A \textit{metatile} is
a gapless grouping of tiles that completely covers a whole number of
cells and cannot be split into smaller metatiles. In most cases, there
are infinitely many possible metatiles and there have been various
approaches to the enumeration problem: obtaining the symbolic
representation all the families of metatiles \cite{EA19}, obtaining a
recursion relation for the number of metatiles of a certain length and
thus expressing the number in terms of a known sequence \cite{EA20a},
identifying a bijection between the metatiles and a set of objects
whose number is known \cite{AE-GenFibSqr}, and constructing a
directed pseudograph (that we refer to as a digraph) to represent the
placing of tiles \cite{EA15}. We will use the first and last of these
approaches and these are described further in
\S\ref{s:meta}. Recursion relations for numbers of tilings
corresponding to a particular class of digraph are derived in the
appendix and these are used to obtain identities for the $m=2$, $t=5$
triangle in \S\ref{s:idn} where further identities concerning the
triangles are also derived, mostly via combinatorial proof.

\section{The two-parameter family of triangles}\label{s:fam}

\begin{figure}[!b]
\begin{tabular}{c|*{14}{p{1.6em}}}
$n$ $\backslash$ $k$&\mbox{}\hfill0&\mbox{}\hfill1&\mbox{}\hfill2&\mbox{}\hfill3&\mbox{}\hfill4&\mbox{}\hfill5&\mbox{}\hfill6&\mbox{}\hfill7&\mbox{}\hfill8&\mbox{}\hfill9&\mbox{}\hfill10&\mbox{}\hfill11&\mbox{}\hfill12&\mbox{}\hfill13\\\hline
 0~~&\mbox{}\hspace*{\fill}\textbf{  1}\\
 1~~&\mbox{}\hspace*{\fill}\textbf{  1}&\mbox{}\hspace*{\fill}\textbf{  0}\\
 2~~&\mbox{}\hspace*{\fill}\textbf{  1}&\mbox{}\hspace*{\fill}\textbf{  0}&\mbox{}\hspace*{\fill}\textbf{  1}\\
 3~~&\mbox{}\hspace*{\fill}\textbf{  1}&\mbox{}\hspace*{\fill}\textbf{  1}&\mbox{}\hspace*{\fill}\textbf{  2}&\mbox{}\hspace*{\fill}\textbf{  0}\\
 4~~&\mbox{}\hspace*{\fill}\textbf{  1}&\mbox{}\hspace*{\fill}\textbf{  2}&\mbox{}\hspace*{\fill}\textbf{  4}&\mbox{}\hspace*{\fill}\textbf{  0}&\mbox{}\hspace*{\fill}\textbf{  1}\\
 5~~&\mbox{}\hspace*{\fill}\textbf{  1}&\mbox{}\hspace*{\fill}\textbf{  3}&\mbox{}\hspace*{\fill}\textbf{  6}&\mbox{}\hspace*{\fill}\textbf{  3}&\mbox{}\hspace*{\fill}\textbf{  3}&\mbox{}\hspace*{\fill}\textbf{  0}\\
 6~~&\mbox{}\hspace*{\fill}\textbf{  1}&\mbox{}\hspace*{\fill}\textbf{  4}&\mbox{}\hspace*{\fill}\textbf{  9}&\mbox{}\hfill  8 &\mbox{}\hspace*{\fill}\textbf{  9}&\mbox{}\hspace*{\fill}\textbf{  0}&\mbox{}\hspace*{\fill}\textbf{  1}\\
 7~~&\mbox{}\hspace*{\fill}\textbf{  1}&\mbox{}\hspace*{\fill}\textbf{  5}&\mbox{}\hfill 13 &\mbox{}\hfill 17 &\mbox{}\hspace*{\fill}\textbf{ 18}&\mbox{}\hspace*{\fill}\textbf{  6}&\mbox{}\hspace*{\fill}\textbf{  4}&\mbox{}\hspace*{\fill}\textbf{  0}\\
 8~~&\mbox{}\hspace*{\fill}\textbf{  1}&\mbox{}\hspace*{\fill}\textbf{  6}&\mbox{}\hfill 18 &\mbox{}\hfill 30 &\mbox{}\hspace*{\fill}\textbf{ 36}&\mbox{}\hfill 20 &\mbox{}\hspace*{\fill}\textbf{ 16}&\mbox{}\hspace*{\fill}\textbf{  0}&\mbox{}\hspace*{\fill}\textbf{  1}\\
 9~~&\mbox{}\hspace*{\fill}\textbf{  1}&\mbox{}\hspace*{\fill}\textbf{  7}&\mbox{}\hfill 24 &\mbox{}\hfill 48 &\mbox{}\hfill 66 &\mbox{}\hfill 55 &\mbox{}\hspace*{\fill}\textbf{ 40}&\mbox{}\hspace*{\fill}\textbf{ 10}&\mbox{}\hspace*{\fill}\textbf{  5}&\mbox{}\hspace*{\fill}\textbf{  0}\\
10~~&\mbox{}\hspace*{\fill}\textbf{  1}&\mbox{}\hspace*{\fill}\textbf{  8}&\mbox{}\hfill 31 &\mbox{}\hfill 72 &\mbox{}\hfill114 &\mbox{}\hfill120 &\mbox{}\hspace*{\fill}\textbf{100}&\mbox{}\hfill 40 &\mbox{}\hspace*{\fill}\textbf{ 25}&\mbox{}\hspace*{\fill}\textbf{  0}&\mbox{}\hspace*{\fill}\textbf{  1}\\
11~~&\mbox{}\hspace*{\fill}\textbf{  1}&\mbox{}\hspace*{\fill}\textbf{  9}&\mbox{}\hfill 39 &\mbox{}\hfill103 &\mbox{}\hfill186 &\mbox{}\hfill234 &\mbox{}\hfill221 &\mbox{}\hfill135 &\mbox{}\hspace*{\fill}\textbf{ 75}&\mbox{}\hspace*{\fill}\textbf{ 15}&\mbox{}\hspace*{\fill}\textbf{  6}&\mbox{}\hspace*{\fill}\textbf{  0}\\
12~~&\mbox{}\hspace*{\fill}\textbf{  1}&\mbox{}\hspace*{\fill}\textbf{ 10}&\mbox{}\hfill 48 &\mbox{}\hfill142 &\mbox{}\hfill289 &\mbox{}\hfill420 &\mbox{}\hfill456 &\mbox{}\hfill350 &\mbox{}\hspace*{\fill}\textbf{225}&\mbox{}\hfill 70 &\mbox{}\hspace*{\fill}\textbf{ 36}&\mbox{}\hspace*{\fill}\textbf{  0}&\mbox{}\hspace*{\fill}\textbf{  1}\\
13~~&\mbox{}\hspace*{\fill}\textbf{  1}&\mbox{}\hspace*{\fill}\textbf{ 11}&\mbox{}\hfill 58 &\mbox{}\hfill190 &\mbox{}\hfill431 &\mbox{}\hfill709 &\mbox{}\hfill876 &\mbox{}\hfill805 &\mbox{}\hfill581 &\mbox{}\hfill280 &\mbox{}\hspace*{\fill}\textbf{126}&\mbox{}\hspace*{\fill}\textbf{ 21}&\mbox{}\hspace*{\fill}\textbf{  7}&\mbox{}\hspace*{\fill}\textbf{  0}\\
\end{tabular}
\caption{The start of a Pascal-like triangle (\seqnum{A354665} in the
  OEIS \cite{Slo-OEIS}) whose
  $(n,k)$-th entry, $\protect\tchb{n}{k}_{2,3}$, is
  the number of $n$-tile tilings using $k$
  $(1,1;3)$-combs (and $n-k$ squares). Entries in bold font (and those
  in bold font in Figs.~\ref{f:m=2,t=4}--\ref{f:m=3,t=3}) are covered by
  identities in \S\ref{s:idn}.}
\label{f:m=2,t=3}
\end{figure}

For $m=1,2,\ldots$ and $t=2,3,\ldots$, let $\tchb{n}{k}_{m,t}$ denote
the number of $n$-tile tilings of $N$-boards that use $k$ $(1,m-1;t)$-combs (and
$n-k$ squares).  We choose that $\tchb{0}{0}_{m,t}=1$ and that
$\tchb{n}{k<0}_{m,t}=\tchb{n}{k>n}_{m,t}=0$.
As a $(1,0;t)$-comb is just a $t$-omino and the
number of $n$-tile tilings using $n$ $t$-ominoes and $n-k$ squares is
 simply $\tbinom{n}{k}$ for any $t$, we have
$\tchb{n}{k}_{1,t\ge2}=\tbinom{n}{k}$ which is Pascal's triangle
(\seqnum{A007318}).  The triangles corresponding to $m=2,3,4,5$ with
$t=2$ are \seqnum{A059259}, \seqnum{A350110}, \seqnum{A350111}, and
\seqnum{A350112}, respectively \cite{AE22}.  We show examples
of the starts of triangles for combs with at least 3 teeth in
Figs.~\ref{f:m=2,t=3}--\ref{f:m=3,t=3}.

\begin{figure}
\begin{tabular}{c|*{14}{p{1.6em}}}
$n$ $\backslash$ $k$&\mbox{}\hfill0&\mbox{}\hfill1&\mbox{}\hfill2&\mbox{}\hfill3&\mbox{}\hfill4&\mbox{}\hfill5&\mbox{}\hfill6&\mbox{}\hfill7&\mbox{}\hfill8&\mbox{}\hfill9&\mbox{}\hfill10&\mbox{}\hfill11&\mbox{}\hfill12&\mbox{}\hfill13\\\hline
 0~~&\mbox{}\hspace*{\fill}\textbf{  1}\\
 1~~&\mbox{}\hspace*{\fill}\textbf{  1}&\mbox{}\hspace*{\fill}\textbf{  0}\\
 2~~&\mbox{}\hspace*{\fill}\textbf{  1}&\mbox{}\hspace*{\fill}\textbf{  0}&\mbox{}\hspace*{\fill}\textbf{  1}\\
 3~~&\mbox{}\hspace*{\fill}\textbf{  1}&\mbox{}\hspace*{\fill}\textbf{  0}&\mbox{}\hspace*{\fill}\textbf{  2}&\mbox{}\hspace*{\fill}\textbf{  0}\\
 4~~&\mbox{}\hspace*{\fill}\textbf{  1}&\mbox{}\hspace*{\fill}\textbf{  1}&\mbox{}\hspace*{\fill}\textbf{  4}&\mbox{}\hspace*{\fill}\textbf{  0}&\mbox{}\hspace*{\fill}\textbf{  1}\\
 5~~&\mbox{}\hspace*{\fill}\textbf{  1}&\mbox{}\hspace*{\fill}\textbf{  2}&\mbox{}\hspace*{\fill}\textbf{  6}&\mbox{}\hspace*{\fill}\textbf{  0}&\mbox{}\hspace*{\fill}\textbf{  3}&\mbox{}\hspace*{\fill}\textbf{  0}\\
 6~~&\mbox{}\hspace*{\fill}\textbf{  1}&\mbox{}\hspace*{\fill}\textbf{  3}&\mbox{}\hspace*{\fill}\textbf{  9}&\mbox{}\hspace*{\fill}\textbf{  4}&\mbox{}\hspace*{\fill}\textbf{  9}&\mbox{}\hspace*{\fill}\textbf{  0}&\mbox{}\hspace*{\fill}\textbf{  1}\\
 7~~&\mbox{}\hspace*{\fill}\textbf{  1}&\mbox{}\hspace*{\fill}\textbf{  4}&\mbox{}\hspace*{\fill}\textbf{ 12}&\mbox{}\hfill 10 &\mbox{}\hspace*{\fill}\textbf{ 18}&\mbox{}\hspace*{\fill}\textbf{  0}&\mbox{}\hspace*{\fill}\textbf{  4}&\mbox{}\hspace*{\fill}\textbf{  0}\\
 8~~&\mbox{}\hspace*{\fill}\textbf{  1}&\mbox{}\hspace*{\fill}\textbf{  5}&\mbox{}\hspace*{\fill}\textbf{ 16}&\mbox{}\hfill 21 &\mbox{}\hspace*{\fill}\textbf{ 36}&\mbox{}\hspace*{\fill}\textbf{ 10}&\mbox{}\hspace*{\fill}\textbf{ 16}&\mbox{}\hspace*{\fill}\textbf{  0}&\mbox{}\hspace*{\fill}\textbf{  1}\\
 9~~&\mbox{}\hspace*{\fill}\textbf{  1}&\mbox{}\hspace*{\fill}\textbf{  6}&\mbox{}\hfill 21 &\mbox{}\hfill 36 &\mbox{}\hspace*{\fill}\textbf{ 60}&\mbox{}\hfill 30 &\mbox{}\hspace*{\fill}\textbf{ 40}&\mbox{}\hspace*{\fill}\textbf{  0}&\mbox{}\hspace*{\fill}\textbf{  5}&\mbox{}\hspace*{\fill}\textbf{  0}\\
10~~&\mbox{}\hspace*{\fill}\textbf{  1}&\mbox{}\hspace*{\fill}\textbf{  7}&\mbox{}\hfill 27 &\mbox{}\hfill 57 &\mbox{}\hspace*{\fill}\textbf{100}&\mbox{}\hfill 81 &\mbox{}\hspace*{\fill}\textbf{100}&\mbox{}\hspace*{\fill}\textbf{ 20}&\mbox{}\hspace*{\fill}\textbf{ 25}&\mbox{}\hspace*{\fill}\textbf{  0}&\mbox{}\hspace*{\fill}\textbf{  1}\\
11~~&\mbox{}\hspace*{\fill}\textbf{  1}&\mbox{}\hspace*{\fill}\textbf{  8}&\mbox{}\hfill 34 &\mbox{}\hfill 84 &\mbox{}\hfill158 &\mbox{}\hfill168 &\mbox{}\hspace*{\fill}\textbf{200}&\mbox{}\hfill 70 &\mbox{}\hspace*{\fill}\textbf{ 75}&\mbox{}\hspace*{\fill}\textbf{  0}&\mbox{}\hspace*{\fill}\textbf{  6}&\mbox{}\hspace*{\fill}\textbf{  0}\\
12~~&\mbox{}\hspace*{\fill}\textbf{  1}&\mbox{}\hspace*{\fill}\textbf{  9}&\mbox{}\hfill 42 &\mbox{}\hfill118 &\mbox{}\hfill243 &\mbox{}\hfill322 &\mbox{}\hspace*{\fill}\textbf{400}&\mbox{}\hfill231 &\mbox{}\hspace*{\fill}\textbf{225}&\mbox{}\hspace*{\fill}\textbf{ 35}&\mbox{}\hspace*{\fill}\textbf{ 36}&\mbox{}\hspace*{\fill}\textbf{  0}&\mbox{}\hspace*{\fill}\textbf{  1}\\
13~~&\mbox{}\hspace*{\fill}\textbf{  1}&\mbox{}\hspace*{\fill}\textbf{ 10}&\mbox{}\hfill 51 &\mbox{}\hfill160 &\mbox{}\hfill361 &\mbox{}\hfill560 &\mbox{}\hfill736 &\mbox{}\hfill560 &\mbox{}\hspace*{\fill}\textbf{525}&\mbox{}\hfill140 &\mbox{}\hspace*{\fill}\textbf{126}&\mbox{}\hspace*{\fill}\textbf{  0}&\mbox{}\hspace*{\fill}\textbf{  7}&\mbox{}\hspace*{\fill}\textbf{  0}\\
\end{tabular}
\caption{The start of a Pascal-like triangle (\seqnum{A354666}) with
  entries $\protect\tchb{n}{k}_{2,4}$.}
\label{f:m=2,t=4}
\end{figure}

\begin{figure}
\begin{tabular}{c|*{14}{p{1.6em}}}
$n$ $\backslash$ $k$&\mbox{}\hfill0&\mbox{}\hfill1&\mbox{}\hfill2&\mbox{}\hfill3&\mbox{}\hfill4&\mbox{}\hfill5&\mbox{}\hfill6&\mbox{}\hfill7&\mbox{}\hfill8&\mbox{}\hfill9&\mbox{}\hfill10&\mbox{}\hfill11&\mbox{}\hfill12&\mbox{}\hfill13\\\hline
 0~~&\mbox{}\hspace*{\fill}\textbf{  1}\\
 1~~&\mbox{}\hspace*{\fill}\textbf{  1}&\mbox{}\hspace*{\fill}\textbf{  0}\\
 2~~&\mbox{}\hspace*{\fill}\textbf{  1}&\mbox{}\hspace*{\fill}\textbf{  0}&\mbox{}\hspace*{\fill}\textbf{  1}\\
 3~~&\mbox{}\hspace*{\fill}\textbf{  1}&\mbox{}\hspace*{\fill}\textbf{  0}&\mbox{}\hspace*{\fill}\textbf{  2}&\mbox{}\hspace*{\fill}\textbf{  0}\\
 4~~&\mbox{}\hspace*{\fill}\textbf{  1}&\mbox{}\hspace*{\fill}\textbf{  0}&\mbox{}\hspace*{\fill}\textbf{  4}&\mbox{}\hspace*{\fill}\textbf{  0}&\mbox{}\hspace*{\fill}\textbf{  1}\\
 5~~&\mbox{}\hspace*{\fill}\textbf{  1}&\mbox{}\hspace*{\fill}\textbf{  1}&\mbox{}\hspace*{\fill}\textbf{  6}&\mbox{}\hspace*{\fill}\textbf{  0}&\mbox{}\hspace*{\fill}\textbf{  3}&\mbox{}\hspace*{\fill}\textbf{  0}\\
 6~~&\mbox{}\hspace*{\fill}\textbf{  1}&\mbox{}\hspace*{\fill}\textbf{  2}&\mbox{}\hspace*{\fill}\textbf{  9}&\mbox{}\hspace*{\fill}\textbf{  0}&\mbox{}\hspace*{\fill}\textbf{  9}&\mbox{}\hspace*{\fill}\textbf{  0}&\mbox{}\hspace*{\fill}\textbf{  1}\\
 7~~&\mbox{}\hspace*{\fill}\textbf{  1}&\mbox{}\hspace*{\fill}\textbf{  3}&\mbox{}\hspace*{\fill}\textbf{ 12}&\mbox{}\hspace*{\fill}\textbf{  5}&\mbox{}\hspace*{\fill}\textbf{ 18}&\mbox{}\hspace*{\fill}\textbf{  0}&\mbox{}\hspace*{\fill}\textbf{  4}&\mbox{}\hspace*{\fill}\textbf{  0}\\
 8~~&\mbox{}\hspace*{\fill}\textbf{  1}&\mbox{}\hspace*{\fill}\textbf{  4}&\mbox{}\hspace*{\fill}\textbf{ 16}&\mbox{}\hfill 12 &\mbox{}\hspace*{\fill}\textbf{ 36}&\mbox{}\hspace*{\fill}\textbf{  0}&\mbox{}\hspace*{\fill}\textbf{ 16}&\mbox{}\hspace*{\fill}\textbf{  0}&\mbox{}\hspace*{\fill}\textbf{  1}\\
 9~~&\mbox{}\hspace*{\fill}\textbf{  1}&\mbox{}\hspace*{\fill}\textbf{  5}&\mbox{}\hspace*{\fill}\textbf{ 20}&\mbox{}\hfill 25 &\mbox{}\hspace*{\fill}\textbf{ 60}&\mbox{}\hspace*{\fill}\textbf{ 15}&\mbox{}\hspace*{\fill}\textbf{ 40}&\mbox{}\hspace*{\fill}\textbf{  0}&\mbox{}\hspace*{\fill}\textbf{  5}&\mbox{}\hspace*{\fill}\textbf{  0}\\
10~~&\mbox{}\hspace*{\fill}\textbf{  1}&\mbox{}\hspace*{\fill}\textbf{  6}&\mbox{}\hspace*{\fill}\textbf{ 25}&\mbox{}\hfill 42 &\mbox{}\hspace*{\fill}\textbf{100}&\mbox{}\hfill 42 &\mbox{}\hspace*{\fill}\textbf{100}&\mbox{}\hspace*{\fill}\textbf{  0}&\mbox{}\hspace*{\fill}\textbf{ 25}&\mbox{}\hspace*{\fill}\textbf{  0}&\mbox{}\hspace*{\fill}\textbf{  1}\\
11~~&\mbox{}\hspace*{\fill}\textbf{  1}&\mbox{}\hspace*{\fill}\textbf{  7}&\mbox{}\hfill 31 &\mbox{}\hfill 66 &\mbox{}\hspace*{\fill}\textbf{150}&\mbox{}\hfill112 &\mbox{}\hspace*{\fill}\textbf{200}&\mbox{}\hspace*{\fill}\textbf{ 35}&\mbox{}\hspace*{\fill}\textbf{ 75}&\mbox{}\hspace*{\fill}\textbf{  0}&\mbox{}\hspace*{\fill}\textbf{  6}&\mbox{}\hspace*{\fill}\textbf{  0}\\
12~~&\mbox{}\hspace*{\fill}\textbf{  1}&\mbox{}\hspace*{\fill}\textbf{  8}&\mbox{}\hfill 38 &\mbox{}\hfill 96 &\mbox{}\hspace*{\fill}\textbf{225}&\mbox{}\hfill224 &\mbox{}\hspace*{\fill}\textbf{400}&\mbox{}\hfill112 &\mbox{}\hspace*{\fill}\textbf{225}&\mbox{}\hspace*{\fill}\textbf{  0}&\mbox{}\hspace*{\fill}\textbf{ 36}&\mbox{}\hspace*{\fill}\textbf{  0}&\mbox{}\hspace*{\fill}\textbf{  1}\\
13~~&\mbox{}\hspace*{\fill}\textbf{  1}&\mbox{}\hspace*{\fill}\textbf{  9}&\mbox{}\hfill 46 &\mbox{}\hfill134 &\mbox{}\hfill325 &\mbox{}\hfill424 &\mbox{}\hspace*{\fill}\textbf{700}&\mbox{}\hfill364 &\mbox{}\hspace*{\fill}\textbf{525}&\mbox{}\hspace*{\fill}\textbf{ 70}&\mbox{}\hspace*{\fill}\textbf{126}&\mbox{}\hspace*{\fill}\textbf{  0}&\mbox{}\hspace*{\fill}\textbf{  7}&\mbox{}\hspace*{\fill}\textbf{  0}\\
\end{tabular}
\caption{The start of a Pascal-like triangle (\seqnum{A354667}) with
  entries $\protect\tchb{n}{k}_{2,5}$.}
\label{f:m=2,t=5}
\end{figure}

\begin{figure}
\begin{tabular}{c|*{14}{p{1.6em}}}
$n$ $\backslash$ $k$&\mbox{}\hfill0&\mbox{}\hfill1&\mbox{}\hfill2&\mbox{}\hfill3&\mbox{}\hfill4&\mbox{}\hfill5&\mbox{}\hfill6&\mbox{}\hfill7&\mbox{}\hfill8&\mbox{}\hfill9&\mbox{}\hfill10&\mbox{}\hfill11&\mbox{}\hfill12&\mbox{}\hfill13\\\hline
 0~~&\mbox{}\hspace*{\fill}\textbf{  1}\\
 1~~&\mbox{}\hspace*{\fill}\textbf{  1}&\mbox{}\hspace*{\fill}\textbf{  0}\\
 2~~&\mbox{}\hspace*{\fill}\textbf{  1}&\mbox{}\hspace*{\fill}\textbf{  0}&\mbox{}\hspace*{\fill}\textbf{  0}\\
 3~~&\mbox{}\hspace*{\fill}\textbf{  1}&\mbox{}\hspace*{\fill}\textbf{  0}&\mbox{}\hspace*{\fill}\textbf{  0}&\mbox{}\hspace*{\fill}\textbf{  1}\\
 4~~&\mbox{}\hspace*{\fill}\textbf{  1}&\mbox{}\hspace*{\fill}\textbf{  0}&\mbox{}\hspace*{\fill}\textbf{  1}&\mbox{}\hspace*{\fill}\textbf{  2}&\mbox{}\hspace*{\fill}\textbf{  0}\\
 5~~&\mbox{}\hspace*{\fill}\textbf{  1}&\mbox{}\hspace*{\fill}\textbf{  1}&\mbox{}\hspace*{\fill}\textbf{  3}&\mbox{}\hspace*{\fill}\textbf{  4}&\mbox{}\hspace*{\fill}\textbf{  0}&\mbox{}\hspace*{\fill}\textbf{  0}\\
 6~~&\mbox{}\hspace*{\fill}\textbf{  1}&\mbox{}\hspace*{\fill}\textbf{  2}&\mbox{}\hfill  5 &\mbox{}\hspace*{\fill}\textbf{  8}&\mbox{}\hspace*{\fill}\textbf{  0}&\mbox{}\hspace*{\fill}\textbf{  0}&\mbox{}\hspace*{\fill}\textbf{  1}\\
 7~~&\mbox{}\hspace*{\fill}\textbf{  1}&\mbox{}\hspace*{\fill}\textbf{  3}&\mbox{}\hfill  8 &\mbox{}\hspace*{\fill}\textbf{ 12}&\mbox{}\hspace*{\fill}\textbf{  0}&\mbox{}\hspace*{\fill}\textbf{  3}&\mbox{}\hspace*{\fill}\textbf{  3}&\mbox{}\hspace*{\fill}\textbf{  0}\\
 8~~&\mbox{}\hspace*{\fill}\textbf{  1}&\mbox{}\hspace*{\fill}\textbf{  4}&\mbox{}\hfill 12 &\mbox{}\hspace*{\fill}\textbf{ 18}&\mbox{}\hspace*{\fill}\textbf{  9}&\mbox{}\hspace*{\fill}\textbf{ 12}&\mbox{}\hspace*{\fill}\textbf{  9}&\mbox{}\hspace*{\fill}\textbf{  0}&\mbox{}\hspace*{\fill}\textbf{  0}\\
 9~~&\mbox{}\hspace*{\fill}\textbf{  1}&\mbox{}\hspace*{\fill}\textbf{  5}&\mbox{}\hfill 16 &\mbox{}\hspace*{\fill}\textbf{ 27}&\mbox{}\hfill 25 &\mbox{}\hfill 29 &\mbox{}\hspace*{\fill}\textbf{ 27}&\mbox{}\hspace*{\fill}\textbf{  0}&\mbox{}\hspace*{\fill}\textbf{  0}&\mbox{}\hspace*{\fill}\textbf{  1}\\
10~~&\mbox{}\hspace*{\fill}\textbf{  1}&\mbox{}\hspace*{\fill}\textbf{  6}&\mbox{}\hfill 21 &\mbox{}\hfill 42 &\mbox{}\hfill 51 &\mbox{}\hfill 66 &\mbox{}\hspace*{\fill}\textbf{ 54}&\mbox{}\hspace*{\fill}\textbf{  0}&\mbox{}\hspace*{\fill}\textbf{  6}&\mbox{}\hspace*{\fill}\textbf{  4}&\mbox{}\hspace*{\fill}\textbf{  0}\\
11~~&\mbox{}\hspace*{\fill}\textbf{  1}&\mbox{}\hspace*{\fill}\textbf{  7}&\mbox{}\hfill 27 &\mbox{}\hfill 62 &\mbox{}\hfill 95 &\mbox{}\hfill135 &\mbox{}\hspace*{\fill}\textbf{108}&\mbox{}\hspace*{\fill}\textbf{ 36}&\mbox{}\hspace*{\fill}\textbf{ 30}&\mbox{}\hspace*{\fill}\textbf{ 16}&\mbox{}\hspace*{\fill}\textbf{  0}&\mbox{}\hspace*{\fill}\textbf{  0}\\
12~~&\mbox{}\hspace*{\fill}\textbf{  1}&\mbox{}\hspace*{\fill}\textbf{  8}&\mbox{}\hfill 34 &\mbox{}\hfill 88 &\mbox{}\hfill160 &\mbox{}\hfill234 &\mbox{}\hspace*{\fill}\textbf{216}&\mbox{}\hfill126 &\mbox{}\hfill 95 &\mbox{}\hspace*{\fill}\textbf{ 64}&\mbox{}\hspace*{\fill}\textbf{  0}&\mbox{}\hspace*{\fill}\textbf{  0}&\mbox{}\hspace*{\fill}\textbf{  1}\\
13~~&\mbox{}\hspace*{\fill}\textbf{  1}&\mbox{}\hspace*{\fill}\textbf{  9}&\mbox{}\hfill 42 &\mbox{}\hfill122 &\mbox{}\hfill252 &\mbox{}\hfill396 &\mbox{}\hfill432 &\mbox{}\hfill321 &\mbox{}\hfill280 &\mbox{}\hspace*{\fill}\textbf{160}&\mbox{}\hspace*{\fill}\textbf{  0}&\mbox{}\hspace*{\fill}\textbf{ 10}&\mbox{}\hspace*{\fill}\textbf{  5}&\mbox{}\hspace*{\fill}\textbf{  0}\\
\end{tabular}
\caption{The start of a Pascal-like triangle (\seqnum{A354668}) with
  entries $\protect\tchb{n}{k}_{3,3}$.}
\label{f:m=3,t=3}
\end{figure}

We can also create a triangle of $\tch{n}{k}_{m,t}$ where this denotes
the number of tilings of an $n$-board that use $k$ $(1,m-1;t)$-combs
(and therefore $n-kt$ squares) again with $\tch{0}{0}_{m,t}=1$. The
two triangles are related via the following identity.

\begin{identity}\label{I:ch=chb}
For $m\ge1$, $t\ge2$, and $n\ge k\ge0$, 
\[
\ch{n}{k}_{m,t}=\chb{n-(t-1)k}{k}{m,t}.
\]
\end{identity}
\begin{proof}
If a tiling contains $n-(t-1)k$ tiles of which $k$ are
$(1,m-1;t)$-combs (and so $n-kt$ are squares), the total length is $n-kt+kt=n$.
\end{proof}

We will refer to the ray of entries given by $\tchb{n-\mu k}{k}_{m,t}$ for
$k=0,\ldots,\floor{n/(\mu+1)}$ as the $n$-th
\textit{$(1,\mu)$-antidiagonal}. A $(1,1)$-antidiagonal is therefore
what is normally referred to simply as an antidiagonal. 
As a consequence of Identity~\ref{I:ch=chb}, the
$(1,t-1)$-antidiagonals of the $\tchb{n}{k}_{m,t}$ triangle are the
rows of the $\tch{n}{k}_{m,t}$ triangle.  In the rest of the paper we
therefore only give identities for the $\tchb{n}{k}_{m,t}$ triangle as
it is more `compact' in the sense that its rows contain fewer trailing
zeros. However, as in AE22, some of the identities are more
straightforward to prove by considering the tiling of an $n$-board, in
which case we need to consider $\tch{n}{k}_{m,t}$.  The following
bijection (which is established in the proof of Theorem~2.1 in
\cite{AE-GenFibSqr}) will be used in such proofs.

\begin{lemma}\label{L:bij} 
For $t\ge2$, $j\ge0$, and $r=0,\ldots,m$, where $m\ge1$, there is a
bijection between the tilings of an $(mj+r)$-board using $k$
$(1,m-1;t)$-combs and $mj+r-kt$ squares and the tilings of an ordered
$m$-tuple of $r$ $(j+1)$-boards followed by $m-r$ $j$-boards using $k$
$t$-ominoes and $mj+r-kt$ squares.
\end{lemma}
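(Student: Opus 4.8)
The plan is to realize the bijection by splitting the $(mj+r)$-board into residue classes modulo $m$. Label its cells $1,2,\ldots,mj+r$, and for $c=1,\ldots,m$ let the $c$-th component board be the set of cells congruent to $c$ modulo $m$, re-indexed $1,2,\ldots$ in increasing order, so that board position $p=c+im$ becomes cell $i+1$ of component $c$. A quick count shows component $c$ has $j+1$ cells when $c\le r$ and $j$ cells when $c>r$; hence the components, listed in order $c=1,\ldots,m$, form exactly an ordered $m$-tuple of $r$ $(j+1)$-boards followed by $m-r$ $j$-boards, as required.

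The key observation is that the $t$ teeth of a $(1,m-1;t)$-comb occupying the board lie at positions $p,p+m,\ldots,p+(t-1)m$; hence they all belong to a single residue class and, after the re-indexing of that component, occupy $t$ consecutive cells --- that is, they form a $t$-omino. The comb's gap cells lie in the other residue classes. Dually, a $t$-omino on cells $i_0+1,\ldots,i_0+t$ of component $c$ corresponds to a comb with teeth at board positions $c+i_0m,\ldots,c+(i_0+t-1)m$, and the upper bound on $i_0$ coming from the length of component $c$ guarantees this comb fits inside the $(mj+r)$-board. A square at board position $p=c+im$ corresponds to a square on cell $i+1$ of component $c$, and conversely.

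I would then define the forward map by replacing each tile of a given tiling of the $(mj+r)$-board by its image under these correspondences, and the inverse map by gluing the tiles of the components back onto the $(mj+r)$-board in the same way. To see the forward map produces valid tilings, note that cell $\mapsto$ (component, index) is a bijection between the $mj+r$ board cells and the disjoint union of the component cells; since the original tiling covers each board cell exactly once and sends each comb to a legitimate $t$-omino, the images cover each component cell exactly once with no overlaps. For the inverse map one checks that distinct (component, index) pairs give distinct board positions, that teeth never run off the board, and that the gap cells of any comb --- lying in other residue classes --- are automatically covered, consistently and without collision, by the tiles placed for those classes. The two maps are mutually inverse and preserve the number $k$ of combs/$t$-ominoes and the number $mj+r-kt$ of squares, which proves the lemma.

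The argument is bookkeeping rather than anything deep, so there is no real obstacle; the one point requiring care is confirming that the residue-class decomposition genuinely decouples the tiling --- that a comb's footprint inside its own residue class is exactly its set of teeth, while its gap cells impose no constraint beyond being covered by something --- and, dually, that re-gluing never makes a tooth collide with material placed for another residue class. Both follow at once from the fact that consecutive teeth are spaced exactly $m$ apart.
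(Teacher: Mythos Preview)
Your argument is correct. The residue-class decomposition you describe --- sending cell $p$ of the $(mj+r)$-board to cell $\lceil p/m\rceil$ of component $p\bmod m$ (with the convention that residue $0$ is component $m$) --- is exactly the right construction, and your verification that combs become $t$-ominoes, squares become squares, and the two maps are inverse bijections preserving the tile counts is complete.

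As for comparison with the paper: the paper does not actually prove this lemma in the text. It simply states the bijection and attributes it to the proof of Theorem~2.1 in the reference \cite{AE-GenFibSqr}. Your residue-class decomposition is the standard and essentially unique way to realize this bijection, and it is almost certainly the same construction carried out in that cited source; in any case you have supplied a self-contained proof where the present paper gives none.
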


\section{Relation of the triangles to polynomials}\label{s:poly}

For $t\geq2$ we define a $(1,t)$-bonacci polynomial as follows:
\begin{equation}\label{e:f(x)}
f^{(t)}_n(x)=f^{(t)}_{n-1}(x)+xf^{(t)}_{n-t}(x)+\delta_{n,0}, 
\quad f^{(t)}_{n<0}(x)=0.
\end{equation}
The $(1,2)$-bonacci polynomials $f^{(2)}_n(x)$ are the Fibonacci
polynomials used in AE22. We refer to the sequence defined by 
\begin{equation}\label{e:f}
f^{(t)}_n=f^{(t)}_{n-1}+f^{(t)}_{n-t}+\delta_{n,0}, 
\quad f^{(t)}_{n<0}=0,
\end{equation}
for $t\geq2$ as the $(1,t)$-bonacci numbers. The $t=2,\ldots,8$ cases
are, respectively, the Fibonacci numbers (\seqnum{A000045}), the
Narayana's cows sequence (\seqnum{A000930}) and sequences
\seqnum{A003269}, \seqnum{A003520}, \seqnum{A005708}, 
\seqnum{A005709}, and \seqnum{A005710} in the OEIS.

\begin{lemma}\label{L:sumcoeff}
The sum of the coefficients of $f^{(t)}_n(x)$ is $f^{(t)}_n(1)=f^{(t)}_n$.
\end{lemma}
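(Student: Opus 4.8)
The statement asserts that the polynomial identity defining $f^{(t)}_n(x)$ in \eqref{e:f(x)} specializes at $x=1$ to the numerical recurrence \eqref{e:f}, and hence $f^{(t)}_n(1)=f^{(t)}_n$. The natural approach is straightforward induction on $n$, using the defining recurrences as the engine. First I would observe that the coefficients of $f^{(t)}_n(x)$ are nonnegative integers (this follows immediately from \eqref{e:f(x)} by induction, since one is building up from $f^{(t)}_0(x)=1$ and $f^{(t)}_{n<0}(x)=0$ by addition of previously-constructed polynomials and the monomial $x\cdot f^{(t)}_{n-t}(x)$), so ``sum of the coefficients'' is well-defined and equals the value at $x=1$.

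The key steps, in order: (i) note the trivial base cases, namely $f^{(t)}_n(1)=0=f^{(t)}_n$ for $n<0$, and $f^{(t)}_0(1)=1=f^{(t)}_0$; (ii) assume $f^{(t)}_j(1)=f^{(t)}_j$ for all $j<n$ with $n\ge1$; (iii) evaluate the recurrence \eqref{e:f(x)} at $x=1$ to get $f^{(t)}_n(1)=f^{(t)}_{n-1}(1)+1\cdot f^{(t)}_{n-t}(1)+\delta_{n,0}=f^{(t)}_{n-1}(1)+f^{(t)}_{n-t}(1)$ (the Kronecker delta vanishing since $n\ge1$); (iv) apply the induction hypothesis to the right-hand side to obtain $f^{(t)}_{n-1}+f^{(t)}_{n-t}$, which is exactly $f^{(t)}_n$ by \eqref{e:f}. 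This closes the induction.

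The only point requiring a moment's care is that the index $n-t$ appearing in the recurrence may be negative (when $n<t$), but this is harmless: in that regime $f^{(t)}_{n-t}(x)=0$ by the stated convention $f^{(t)}_{n<0}(x)=0$, and likewise $f^{(t)}_{n-t}=0$, so both recurrences degenerate consistently to $f^{(t)}_n(x)=f^{(t)}_{n-1}(x)$ and $f^{(t)}_n=f^{(t)}_{n-1}$ (plus the delta term at $n=0$, already handled as a base case). Thus there is in fact no genuine obstacle here; the ``hard part'' is merely bookkeeping the boundary conventions so that the induction is clean, and one might even prefer to phrase the whole argument as the single remark that the ring homomorphism $\mathbb{Z}[x]\to\mathbb{Z}$ sending $x\mapsto 1$ carries the defining relation \eqref{e:f(x)} to the defining relation \eqref{e:f} term by term, uniqueness of the solution to the latter then forcing $f^{(t)}_n(1)=f^{(t)}_n$ for all $n$.
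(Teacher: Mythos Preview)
Your proof is correct and follows essentially the same approach as the paper: evaluating the defining recurrence \eqref{e:f(x)} at $x=1$ yields \eqref{e:f}, so $f^{(t)}_n(1)$ and $f^{(t)}_n$ satisfy identical recurrences with identical initial data and hence coincide. The paper states this in two sentences without spelling out the induction, whereas you have written out the bookkeeping explicitly; the underlying argument is the same.
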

\begin{proof}
The sum of the coefficients of $f^{(t)}_n(x)$ can be expressed as
$f^{(t)}_n(1)$.  Putting $x=1$ into \eqref{e:f(x)} gives \eqref{e:f}
with $f^{(t)}_n$ replaced by $f^{(t)}_n(1)$.
\end{proof}

In the next lemma and theorem (which are generalizations of Lemma~13
and Theorem~14 in AE22) we employ the coefficient operator $[x^k]$
which denotes the coefficient of $x^k$ in the term it precedes.

\begin{lemma}\label{L:f=t}  
Let $f(t,n,k)=[x^k]f^{(t)}_n(x)$ and let $b(t,n,k)$ be the number of tilings of
an $n$-board with squares and $t$-ominoes that use exactly $k$
$t$-ominoes. Then $f(t,n,k)=b(t,n,k)$ for all $n$ and $k$.
\end{lemma}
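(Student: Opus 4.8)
The statement is that $[x^k]f^{(t)}_n(x)$ equals the number $b(t,n,k)$ of tilings of an $n$-board by squares and $t$-ominoes using exactly $k$ $t$-ominoes. The natural plan is to show that both sides satisfy the same recurrence with the same initial/boundary conditions. First I would record the combinatorial recurrence for $b(t,n,k)$ by conditioning on the last tile of the board: if it is a square, the rest is an $(n-1)$-board with $k$ $t$-ominoes; if it is a $t$-omino, the rest is an $(n-t)$-board with $k-1$ $t$-ominoes. This gives $b(t,n,k)=b(t,n-1,k)+b(t,n-t,k-1)$ for $n\ge1$, with $b(t,0,0)=1$, $b(t,n,k)=0$ whenever $n<0$ or $k<0$, and (the empty board contributing) $b(t,0,k)=\delta_{k,0}$.

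Next I would extract the same recurrence on the polynomial side. Writing $f(t,n,k)=[x^k]f^{(t)}_n(x)$ and applying $[x^k]$ to the defining relation \eqref{e:f(x)}, namely $f^{(t)}_n(x)=f^{(t)}_{n-1}(x)+x\,f^{(t)}_{n-t}(x)+\delta_{n,0}$, yields $f(t,n,k)=f(t,n-1,k)+f(t,n-t,k-1)+\delta_{n,0}\delta_{k,0}$, together with $f(t,n,k)=0$ for $n<0$ (since $f^{(t)}_{n<0}(x)=0$) and $f(t,n,k)=0$ for $k<0$ (a polynomial has no negative-degree terms). For $n=0$ the relation gives $f^{(t)}_0(x)=1$, so $f(t,0,k)=\delta_{k,0}=b(t,0,k)$; for $n\ge1$ the Kronecker term vanishes and the recurrence coincides exactly with the one for $b(t,n,k)$.

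The conclusion then follows by a straightforward double induction (say on $n$, with an inner induction on $k$, or simply on $n+k$): the base cases $n\le 0$ agree by the computation above, and the inductive step is immediate since both quantities are given by $(\,\cdot\,,n-1,k)+(\,\cdot\,,n-t,k-1)$ and those arguments are covered by the induction hypothesis. I do not expect any real obstacle here; the only point requiring a word of care is bookkeeping the boundary cases ($n<0$, $k<0$, and the single nonzero seed at $(n,k)=(0,0)$) so that the combinatorial and polynomial recurrences are matched on the nose rather than merely for $n,k$ large. An equally valid alternative, which I would mention as a remark, is to argue directly bijectively: each tiling of the $n$-board with $k$ $t$-ominoes is a word in the letters $S$ (length $1$) and $T$ (length $t$), and expanding the product form of $f^{(t)}_n(x)$ coming from iterating \eqref{e:f(x)} produces exactly one monomial $x^{(\text{number of }T\text{'s})}$ per such word; collecting the coefficient of $x^k$ recovers $b(t,n,k)$.
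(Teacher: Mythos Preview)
Your argument is correct: extracting coefficients from the defining relation \eqref{e:f(x)} gives exactly the recurrence $f(t,n,k)=f(t,n-1,k)+f(t,n-t,k-1)$ for $n\ge1$, with the seed $f(t,0,k)=\delta_{k,0}$ and the boundary values $f(t,n<0,k)=f(t,n,k<0)=0$, and conditioning on the last tile shows $b(t,n,k)$ satisfies the identical recurrence and boundary data. The induction on $n$ then goes through without issue, and your bookkeeping of the cases $1\le n<t$ (where $b(t,n-t,k-1)=0$ by the convention) is sound.

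That said, your route is not the one the paper takes. The paper's proof is a one-line appeal to Theorem~10 of AE22 (the earlier paper in the series), which gives a general recipe relating the number of $n$-board tilings using $k$ tiles of a distinguished type to coefficients of a polynomial, once the metatiles are identified; here the metatiles are simply the square and the $t$-omino, and the lemma drops out. Your argument is more elementary and entirely self-contained, which is a virtue if one does not want to rely on the cited machinery; the paper's approach, by contrast, situates this lemma as a trivial instance of a broader framework already developed, at the cost of an external reference. Both are perfectly adequate for a result of this size.
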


\begin{proof}
This follows from Theorem~10 of AE22. The metatiles are the square and
$t$-omino. 
\end{proof}

\begin{theorem}\label{T:poly}
For $j\ge0$, $k\ge0$, $m\ge1$, $t\ge2$, and  $r=0,\ldots,m-1$,
\begin{equation}\label{e:poly} 
\chb{mj+r-(t-1)k}{k}{m,t}
=[x^k]\bigl(f^{(t)}_j(x)\bigr)^{m-r}\bigl(f^{(t)}_{j+1}(x)\bigr)^r.
\end{equation} 
\end{theorem}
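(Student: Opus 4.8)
The plan is to chain together the three preceding lemmas, so that almost nothing new needs to be proved. First I would apply Identity~\ref{I:ch=chb} with $n=mj+r$ to rewrite the left-hand side of \eqref{e:poly} as $\ch{mj+r}{k}_{m,t}$, the number of tilings of a \emph{single} $(mj+r)$-board that use exactly $k$ $(1,m-1;t)$-combs (and hence $mj+r-kt$ squares). This turns the claim into a statement about tilings of a board of fixed length, where the comb structure is what will be unravelled.

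Next I would invoke Lemma~\ref{L:bij} (which applies since $r\le m-1\le m$): it identifies those tilings with the tilings of an ordered $m$-tuple consisting of $r$ copies of a $(j+1)$-board followed by $m-r$ copies of a $j$-board, using $k$ $t$-ominoes and $mj+r-kt$ squares in total across the tuple. Because tiles on different boards of the tuple are placed independently and the $k$ $t$-ominoes may be distributed among the boards in any way, the number of such tilings is the convolution $\sum_{k_1+\cdots+k_m=k}\prod_{i=1}^{m} b(t,\ell_i,k_i)$, where $(\ell_1,\dots,\ell_m)$ lists the board lengths ($\ell_i=j+1$ for $r$ indices $i$ and $\ell_i=j$ for the remaining $m-r$), and $b(t,\ell,k')$ is the quantity from Lemma~\ref{L:f=t}.

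Finally, Lemma~\ref{L:f=t} gives $b(t,\ell,k')=[x^{k'}]f^{(t)}_\ell(x)$, and the convolution above is exactly the rule for multiplying power series, so
\[
\ch{mj+r}{k}_{m,t}=[x^k]\prod_{i=1}^{m} f^{(t)}_{\ell_i}(x)=[x^k]\bigl(f^{(t)}_j(x)\bigr)^{m-r}\bigl(f^{(t)}_{j+1}(x)\bigr)^{r},
\]
which is \eqref{e:poly}.

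I do not expect a genuine obstacle: every step is an appeal to a result already in hand. The one point needing a sentence of care is the passage from counting tilings of the tuple to the product of polynomials — one must observe that the total number of $t$-ominoes splits additively over the boards of the tuple, that the boards are tiled independently, and that a product of polynomials records precisely this convolution of coefficient sequences. A minor bookkeeping remark is that Lemma~\ref{L:bij} is stated for $r=0,\dots,m$ while the theorem only needs $r=0,\dots,m-1$; the omitted case $r=m$ would be the $r=0$ case with $j$ replaced by $j+1$, consistent with the stated range, so nothing is lost.
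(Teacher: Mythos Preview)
Your proposal is correct and follows essentially the same route as the paper's own proof: apply Identity~\ref{I:ch=chb} to pass to $\ch{mj+r}{k}_{m,t}$, use Lemma~\ref{L:bij} to replace the comb tiling by a tuple of $t$-omino tilings, express the count as a convolution over $k_1+\cdots+k_m=k$, and then invoke Lemma~\ref{L:f=t} to identify each factor with a coefficient of $f^{(t)}_{\ell_i}(x)$ so that the convolution becomes $[x^k]$ of the product. The paper writes out the convolution and the coefficient extraction a bit more explicitly, but the argument is the same.
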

\begin{proof}
From Identity~\ref{I:ch=chb},
$\tchb{mj+r-(t-1)k}{k}_{m,t}=\tch{mj+r}{k}_{m,t}$. From Lemma~\ref{L:bij},
$\tch{mj+r}{k}_{m,t}$ equals the number of ways to tile an ordered
$m$-tuple of $r$ $(j+1)$-boards followed by $m-r$ $j$-boards using $k$
$t$-ominoes (and $mj+r-kt$ squares). The number of such tilings of the
$m$-tuple of boards is
\[
\sum_{\substack{k_1\ge0,\,k_2\ge0,\,\ldots,\,k_m\ge0,\\k_1+k_2+\cdots+k_m=k}}
\Biggl(\prod_{i=1}^r b(t,j+1,k_i)\Biggr)
\Biggl(\prod_{i=r+1}^m b(t,j,k_i)\Biggr)
\]
in which the first product is omitted when $r=0$.
The coefficient of $x^k$ in 
$\bigl(f^{(t)}_{j+1}(x)\bigr)^r\bigl(f^{(t)}_j(x)\bigr)^{m-r}$ is
\begin{align*}  
&[x^k]
\Biggl(\prod_{i=1}^r \sum_{k_i=0}^{\floor{(j+1)/t}}f(t,j+1,k_i)x^{k_i}\Biggr)
\Biggl(\prod_{i=r+1}^m \sum_{k_i=0}^{\floor{j/t}}f(t,j,k_i)x^{k_i}\Biggr)\\
&\qquad=
[x^k]\sum_{k_1\ge0,k_2\ge0,\ldots,k_m\ge0}\Biggl(\prod_{i=1}^r f(t,j+1,k_i)\Biggr)
\Biggl(\prod_{i=r+1}^m f(t,j,k_i)\Biggr)x^{k_1+k_2+\cdots+k_m}\\
&\qquad=
\sum_{\substack{k_1\ge0,\,k_2\ge0,\,\ldots,\,k_m\ge0,\\k_1+k_2+\cdots+k_m=k}}
\Biggl(\prod_{i=1}^r f(t,j+1,k_i)\Biggr)
\Biggl(\prod_{i=1}^r f(t,j,k_i)\Biggr).
\end{align*}
The result then follows from Lemma~\ref{L:f=t}.  
\end{proof}

The following identity gives the sums of the $(1,t-1)$-antidiagonals
of the $\tchb{n}{k}_{m,t}$ triangle.  
\begin{identity}\label{I:adiagsum} 
For $t\ge2$, $j\ge0$, $m\ge1$, and $r=0,\ldots,m-1$,
\[
\sum_{k=0}^{\floor{(mj+r)/t}}\chb{mj+r-(t-1)k}{k}{m,t}
=\bigl(f^{(t)}_j\bigr)^{m-r}\bigl(f^{(t)}_{j+1}\bigr)^r.
\]
\end{identity}
\begin{proof}
Summing \eqref{e:poly} over all permitted $k$ gives the sum of all
coefficients of
\[
F(x)=\bigl(f^{(t)}_j(x)\bigr)^{m-r}\bigl(f^{(t)}_{j+1}(x)\bigr)^r
\]
which is $F(1)$ and equals
$\bigl(f^{(t)}_j\bigr)^{m-r}\bigl(f^{(t)}_{j+1}\bigr)^r$ by
Lemma~\ref{L:sumcoeff}.
\end{proof}

\section{Relation of the triangles to restricted combinations}\label{s:comb}

We now look at
$S^{(m,t)}(n,k)$, the number of subsets of $\Nset_n$ of
size $k$ such that the difference of any two elements of the subset
does not equal any element in the set
$\mathcal{Q}=\{m,2m,\ldots,(t-1)m\}$.  For example,
$S^{(2,3)}(5,0)=1$, $S^{(2,3)}(5,1)=5$, $S^{(2,3)}(5,2)=6$, and
$S^{(2,3)}(5,k>2)=0$ since the possible subsets of $\Nset_5$
such that no two elements in the subset differ by 2 or 4 are $\{\}$,
$\{1\}$, $\{2\}$, $\{3\}$, $\{4\}$, $\{5\}$, $\{1,2\}$, $\{2,3\}$,
$\{3,4\}$, $\{4,5\}$, $\{1,4\}$, and $\{2,5\}$.  There is a formula
for $S^{(m,t)}(n,k)$ in terms of sums of products of binomial
coefficients \cite{MS08}.  Here we will show that
$S^{(m,t)}(n,k)=\tchb{n+(t-1)(m-k)}{k}_{m,t}$ and hence obtain an
expression for the number of subsets in terms of coefficients of products
of $(1,t)$-bonacci polynomials which is a generalization of earlier
results \cite{KL91c,AE22}.  We first establish the following
bijection.

\begin{lemma}\label{L:ksub}
For $m,n\ge1$, $t\ge2$, and $k\ge0$, there is a bijection between the
$k$-subsets of $\Nset_n$ such that all pairs of elements taken from a
subset do not differ by an element from the set
$\mathcal{Q}=\{m,2m,\ldots,(t-1)m\}$, and the tilings of an
$(n+(t-1)m)$-board with $k$ $(1,m-1;t)$-combs and $n+(t-1)m-kt$
squares.
\end{lemma}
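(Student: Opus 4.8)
The plan is to exhibit the bijection directly by sending a tiling to the set of positions of the \emph{leftmost teeth} of its combs, generalizing the $t=2$ argument of \cite{AE22}. First I would record that a $(1,m-1;t)$-comb whose leftmost tooth occupies cell $p$ occupies precisely the cells $\{p+am:0\le a\le t-1\}$ (its teeth), spanning $(t-1)m+1$ cells in all; since the board has length $n+(t-1)m$, such a comb fits if and only if $1\le p\le n$. Given a tiling of the $(n+(t-1)m)$-board with $k$ combs and $n+(t-1)m-kt$ squares, let $\phi$ send it to the set $A\subseteq\Nset_n$ of leftmost-tooth positions of its combs. Since the board is tiled only by squares and combs, the squares occupy exactly the cells left uncovered by the combs, so the whole tiling is recovered from the comb placements, hence from $A$; in particular $\phi$ is injective on the set of all such tilings.

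Next I would verify that $\phi$ lands in, and maps onto, the $\mathcal{Q}$-avoiding $k$-subsets of $\Nset_n$. The crucial computation is elementary: combs with leftmost teeth at $i<j$ share a cell if and only if $i+am=j+bm$ for some $a,b\in\{0,\ldots,t-1\}$, i.e.\ if and only if $j-i=cm$ for an integer $c$ with $1\le c\le t-1$, i.e.\ $j-i\in\mathcal{Q}$. (One direction uses that $a,b$ range over a full block of $t$ consecutive integers, so $a-b$ attains every value in $\{-(t-1),\ldots,t-1\}$; hence every element of $\mathcal{Q}$ is realizable and no other positive index difference is.) Consequently, in any tiling no two combs can have leftmost teeth differing by an element of $\mathcal{Q}$, so $\phi$ produces a $\mathcal{Q}$-avoiding $k$-subset. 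Conversely, given such a subset $A$ of size $k$, place a comb with leftmost tooth at each $p\in A$: each fits (as $p\le n$), and by the same computation no two overlap, so the $kt$ tooth-cells are distinct and the remaining $n+(t-1)m-kt$ cells may be filled with squares, giving a tiling $\psi(A)$ with the correct tile counts and $\phi(\psi(A))=A$.

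Finally, since $\phi$ is injective on all such tilings and $\phi\circ\psi=\mathrm{id}$ on the $\mathcal{Q}$-avoiding $k$-subsets of $\Nset_n$, the map $\phi$ restricts to a bijection onto that set, as claimed. The only real work is the cell-intersection computation of the second step; it is routine but must be carried out carefully, keeping track of the index ranges so the ``if and only if'' is a genuine equivalence in both directions. Everything else—well-definedness of $\phi$ and $\psi$, the bookkeeping on numbers of squares and combs, and the inverse relations—is immediate once one observes that a square-and-comb tiling is completely determined by the placement of its combs.
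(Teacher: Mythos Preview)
Your proof is correct and follows essentially the same approach as the paper: both set up the bijection by identifying a $k$-subset element $i$ with the position of the leftmost tooth of a comb, filling the remaining cells with squares. Your write-up is more thorough than the paper's, in that you explicitly verify the two-sided equivalence ``two combs overlap $\Leftrightarrow$ their leftmost-tooth positions differ by an element of $\mathcal{Q}$'' and check injectivity and surjectivity separately, whereas the paper leaves some of this implicit.
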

\begin{proof}
We label the cells of the $(n+(t-1)m)$-board from 1 to $n+(t-1)m$. If a
$k$-subset contains element $i$ then we place a comb so that its left
tooth occupies cell $i$. Notice that if $i=n$ then the rightmost tooth
occupies the final cell on the board. After placing combs
corresponding to each element of the subset, the rest of the board is
filled with squares of which there must be $n+(t-1)m-kt$. Conversely, the
tiling of any $(n+(t-1)m)$-board with $k$ combs corresponds to a
$k$-subset where no two elements differ by an element of $\mathcal{Q}$
 since the remaining teeth of
a comb whose leftmost tooth occupies cell $i$ lie on cells 
$i+m,i+2m,\ldots,i+(t-1)m$ which means none of these cells can be
occupied by the leftmost tooth of another comb.
\end{proof}

\begin{corollary}\label{C:S=chb}
For $m,n\ge1$, $t\ge2$, and $k\ge0$,
$S^{(m,t)}(n,k)=\tchb{n+(t-1)(m-k)}{k}_{m,t}$.
\end{corollary}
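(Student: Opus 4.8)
The plan is to combine the bijection of Lemma~\ref{L:ksub} with Identity~\ref{I:ch=chb}, so that no new combinatorics is needed — only a bookkeeping of the board length. First I would invoke Lemma~\ref{L:ksub}: for $m,n\ge1$, $t\ge2$, and $k\ge0$, the number $S^{(m,t)}(n,k)$ of admissible $k$-subsets of $\Nset_n$ equals the number of tilings of an $(n+(t-1)m)$-board with exactly $k$ $(1,m-1;t)$-combs (the remaining $n+(t-1)m-kt$ cells being filled with squares). By definition of the less compact triangle, that count is precisely $\tch{n+(t-1)m}{k}_{m,t}$.

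Next I would apply Identity~\ref{I:ch=chb} with $N=n+(t-1)m$ in the role of its ``$n$'', giving
\[
\tch{n+(t-1)m}{k}_{m,t}=\chb{\,n+(t-1)m-(t-1)k\,}{k}{m,t}
=\chb{\,n+(t-1)(m-k)\,}{k}{m,t}.
\]
Chaining the two equalities yields $S^{(m,t)}(n,k)=\tchb{n+(t-1)(m-k)}{k}_{m,t}$, as claimed. One should also note the hypotheses match: Lemma~\ref{L:ksub} and Identity~\ref{I:ch=chb} both hold under $m\ge1$, $t\ge2$, $k\ge0$ (and $n\ge1$), and when $k>n$ or the subset is too large to exist both sides vanish, so the stated range of validity is covered.

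There is essentially no obstacle here; the only thing to watch is that Identity~\ref{I:ch=chb} is stated for ``$n\ge k\ge0$'', so I would briefly check that $n+(t-1)m\ge k$ holds whenever a tiling (equivalently, an admissible subset) exists — which it does, since $k$ combs occupy $kt\le n+(t-1)m$ cells, forcing $k\le n+(t-1)m$ — and that otherwise both sides are $0$ by the conventions $\tch{N}{k>N}_{m,t}=0$ and $\tchb{N}{k>N}_{m,t}=0$. As a sanity check one can verify the small case from the text: $S^{(2,3)}(5,2)=6=\tchb{5+2(2-2)}{2}_{2,3}=\tchb{5}{2}_{2,3}$, which matches the entry in Figure~\ref{f:m=2,t=3}.
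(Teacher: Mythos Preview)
Your argument is correct and follows exactly the paper's own route: invoke Lemma~\ref{L:ksub} to get $S^{(m,t)}(n,k)=\tch{n+(t-1)m}{k}_{m,t}$, then apply Identity~\ref{I:ch=chb} to rewrite this as $\tchb{n+(t-1)(m-k)}{k}_{m,t}$. The extra remarks on the range of validity and the numerical check are fine but not needed.
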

\begin{proof}
From Lemma~\ref{L:ksub}, $S^{(m,t)}(n,k)=\tch{n+(t-1)m}{k}_{m,t}$.
Identity~\ref{I:ch=chb} then gives the result.
\end{proof}

\begin{corollary}\label{C:A}
For $m,n\ge1$, $t\ge2$, the sum of the elements in the $n$-th
$(1,t-1)$-antidiagonal of 
$\tchb{n}{k}_{m,t}$ is the number of subsets of $\Nset_{n-(t-1)m}$
  chosen so that no two elements of the subsets differ by any member
  of the set $\{m,\ldots,(t-1)m\}$.
\end{corollary}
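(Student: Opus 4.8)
The plan is to recognize the $n$-th $(1,t-1)$-antidiagonal sum as the number of tilings of an $n$-board and then read that number off via the bijection already in hand. Concretely, the $n$-th $(1,t-1)$-antidiagonal of $\tchb{n}{k}_{m,t}$ consists of the entries $\tchb{n-(t-1)k}{k}_{m,t}$ for $k=0,\ldots,\floor{n/t}$, and Identity~\ref{I:ch=chb} rewrites each such entry as $\tch{n}{k}_{m,t}$. Hence the antidiagonal sum equals $\sum_{k\ge0}\tch{n}{k}_{m,t}$, which is just the total number of tilings of an $n$-board by squares and $(1,m-1;t)$-combs, taken over all possible numbers $k$ of combs.

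The second step is to apply Lemma~\ref{L:ksub} with board length $n$, i.e.\ with the lemma's ground-set parameter set to $n-(t-1)m$: for each fixed $k$ it gives a bijection between the tilings of the $n$-board that use $k$ combs and the $k$-subsets of $\Nset_{n-(t-1)m}$ no two of whose elements differ by a member of $\{m,2m,\ldots,(t-1)m\}$. Summing over $k$ turns this into a bijection between \emph{all} tilings of the $n$-board and \emph{all} such subsets (of any size), which is precisely the asserted equality. Equivalently, one may simply sum the identity of Corollary~\ref{C:S=chb} over $k$: writing $n'=n-(t-1)m$, we get $\sum_{k\ge0}S^{(m,t)}(n',k)=\sum_{k\ge0}\tchb{n'+(t-1)(m-k)}{k}_{m,t}=\sum_{k\ge0}\tchb{n-(t-1)k}{k}_{m,t}$, the left-hand side being the subset count and the right-hand side the antidiagonal sum.

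I do not expect a genuine obstacle here; the only point that needs a word of care is the range of $n$. Lemma~\ref{L:ksub} and Corollary~\ref{C:S=chb} are stated with ground-set parameter at least $1$, i.e.\ for $n\ge(t-1)m+1$, and for those $n$ the argument above is complete. When $n\le(t-1)m$ a single comb (which spans $1+(t-1)m$ cells) already fails to fit on an $n$-board, so $\tch{n}{k}_{m,t}=0$ for all $k\ge1$ and the antidiagonal sum collapses to $\tch{n}{0}_{m,t}=1$; on the other side $n-(t-1)m\le0$, so under the usual convention that $\Nset_j=\emptyset$ for $j\le0$ there is exactly one (empty) subset, and the two sides again agree. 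Thus the whole content of the corollary is carried by Identity~\ref{I:ch=chb} together with the bijection of Lemma~\ref{L:ksub}.
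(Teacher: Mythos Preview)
Your proof is correct and takes essentially the same approach as the paper: the paper's argument is exactly your ``equivalently'' clause, namely invoking Corollary~\ref{C:S=chb} to write $\tchb{n-(t-1)k}{k}_{m,t}=S^{(m,t)}(n-(t-1)m,k)$ and then summing over $k$. Your additional discussion of the boundary case $n\le(t-1)m$ is a nice bit of care that the paper leaves implicit.
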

\begin{proof}
The elements in the $(1,t-1)$-antidiagonal are, for $k\ge0$, 
$\tchb{n-(t-1)k}{k}_{m,t}=S^{(m,t)}(n-(t-1)m,k)$ by Corollary~\ref{C:S=chb}.
Summing over all $k$ then gives the result.
\end{proof}

The next two corollaries follow from Theorem~\ref{T:poly} and
Identity~\ref{I:adiagsum}, respectively.

\begin{corollary}
For $j,k\ge0$, $m\ge1$, $t\ge2$, and $r=0,\ldots,m-1$, 
\[
S^{(m,t)}(mj+r,k)
=[x^k]\bigl(f^{(t)}_{j+t-1}(x)\bigr)^{m-r}\bigl(f^{(t)}_{j+t}(x)\bigr)^r.
\]
\end{corollary}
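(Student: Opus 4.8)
The plan is to chain together Corollary~\ref{C:S=chb} and Theorem~\ref{T:poly}, so that the proof reduces to index bookkeeping. First I would apply Corollary~\ref{C:S=chb} with $n=mj+r$, which gives
\[
S^{(m,t)}(mj+r,k)=\chb{mj+r+(t-1)(m-k)}{k}{m,t}.
\]
Next I would simplify the upper argument using $(t-1)(m-k)=(t-1)m-(t-1)k$, so that
\[
mj+r+(t-1)(m-k)=m(j+t-1)+r-(t-1)k .
\]

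Then I would invoke Theorem~\ref{T:poly} with $j$ replaced by $j+t-1$. This substitution is legitimate: the theorem only needs the shifted first index to be a nonnegative integer, and $j+t-1\ge 1$ since $j\ge 0$ and $t\ge 2$; meanwhile $r$ still lies in $\{0,\ldots,m-1\}$ and $k\ge 0$ is unrestricted (entries or coefficients with $k$ out of range are simply $0$). Theorem~\ref{T:poly} then yields
\[
\chb{m(j+t-1)+r-(t-1)k}{k}{m,t}=[x^k]\bigl(f^{(t)}_{j+t-1}(x)\bigr)^{m-r}\bigl(f^{(t)}_{j+t}(x)\bigr)^r ,
\]
which is exactly the claimed identity once one notes $(j+t-1)+1=j+t$.

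There is essentially no obstacle here; all the combinatorial and algebraic content already resides in the bijection of Lemma~\ref{L:ksub} (through Corollary~\ref{C:S=chb}) and in Theorem~\ref{T:poly}. The only point I would state explicitly in the write-up is why the hypotheses of Theorem~\ref{T:poly} persist after the shift $j\mapsto j+t-1$, i.e.\ that the extra $(t-1)m$ cells introduced by the bijection of Lemma~\ref{L:ksub} are precisely what advances the $(1,t)$-bonacci index by $t-1$.
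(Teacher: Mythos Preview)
Your proposal is correct and follows exactly the route the paper takes: the corollary is obtained by combining Corollary~\ref{C:S=chb} with Theorem~\ref{T:poly} after the index shift $j\mapsto j+t-1$. The only very minor point is that Corollary~\ref{C:S=chb} is stated for $n\ge1$, so the case $j=r=0$ (i.e.\ $n=0$) is not formally covered, though it is trivial since $S^{(m,t)}(0,k)=\delta_{k,0}$ and $f^{(t)}_{t-1}(x)=1$.
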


\begin{corollary}
For $j\ge0$, $m\ge1$, $t\ge2$, and $r=0,\ldots,m-1$, the number of
subsets of $\Nset_{mj+r}$ each of which lack pairs of elements that
differ by a multiple of $m$ up to $(t-1)m$ is
$\bigl(f^{(t)}_{j+t-1}\bigr)^{m-r}\bigl(f^{(t)}_{j+t}\bigr)^r$.
\end{corollary}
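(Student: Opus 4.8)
The plan is to reduce the statement to Identity~\ref{I:adiagsum} by first re-expressing the quantity of interest as a sum of triangle entries along a single $(1,t-1)$-antidiagonal. Write $P$ for the number of subsets of $\Nset_{mj+r}$ no two of whose elements differ by an element of $\mathcal{Q}=\{m,2m,\ldots,(t-1)m\}$. Partitioning these subsets according to their size $k$ gives $P=\sum_{k\ge0}S^{(m,t)}(mj+r,k)$, a finite sum since $S^{(m,t)}(n,k)=0$ once $k$ is large enough.

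First I would apply Corollary~\ref{C:S=chb} with $n=mj+r$ to each summand, obtaining
\[
S^{(m,t)}(mj+r,k)=\chb{mj+r+(t-1)(m-k)}{k}{m,t}=\chb{m(j+t-1)+r-(t-1)k}{k}{m,t},
\]
where in the last step I have used $mj+r+(t-1)m=m(j+t-1)+r$. Thus $P$ is exactly the sum of the entries $\tchb{m(j+t-1)+r-(t-1)k}{k}_{m,t}$ over $k=0,1,\ldots,\floor{(m(j+t-1)+r)/t}$; equivalently, this is the sum of the $n$-th $(1,t-1)$-antidiagonal with $n=m(j+t-1)+r=(mj+r)+(t-1)m$, which is the content of Corollary~\ref{C:A}.

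Then I would invoke Identity~\ref{I:adiagsum} with its parameter $j$ replaced by $j+t-1$ (legitimate, since $j+t-1\ge0$ and the admissible range $r=0,\ldots,m-1$ is unaffected), which evaluates this antidiagonal sum as $\bigl(f^{(t)}_{j+t-1}\bigr)^{m-r}\bigl(f^{(t)}_{j+t}\bigr)^r$, giving the claim. Alternatively, one can bypass Identity~\ref{I:adiagsum} and argue directly from the preceding corollary: summing $S^{(m,t)}(mj+r,k)=[x^k]\bigl(f^{(t)}_{j+t-1}(x)\bigr)^{m-r}\bigl(f^{(t)}_{j+t}(x)\bigr)^r$ over all $k$ yields the sum of all coefficients of $F(x)=\bigl(f^{(t)}_{j+t-1}(x)\bigr)^{m-r}\bigl(f^{(t)}_{j+t}(x)\bigr)^r$, i.e.\ $F(1)$, and Lemma~\ref{L:sumcoeff} gives $f^{(t)}_n(1)=f^{(t)}_n$, whence $F(1)=\bigl(f^{(t)}_{j+t-1}\bigr)^{m-r}\bigl(f^{(t)}_{j+t}\bigr)^r$.

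Since every ingredient is already available, I do not expect a genuine obstacle here; the only point requiring care is the index bookkeeping in the shift $j\mapsto j+t-1$, which originates from the $(t-1)m$-cell padding in the bijection of Lemma~\ref{L:ksub}, together with the check that the range of $r$ is preserved under that shift so that Identity~\ref{I:adiagsum} (or Lemma~\ref{L:sumcoeff}) applies verbatim.
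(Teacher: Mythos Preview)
Your proposal is correct and is essentially the paper's intended argument: the paper merely states that this corollary follows from Identity~\ref{I:adiagsum}, and you have spelled out precisely that reduction (via Corollary~\ref{C:S=chb}/Corollary~\ref{C:A} and the index shift $j\mapsto j+t-1$), together with the equivalent route through the preceding corollary and Lemma~\ref{L:sumcoeff}. The only nitpick is that Corollary~\ref{C:S=chb} is stated for $n\ge1$, so the boundary case $j=r=0$ is not literally covered by your first route; your alternative argument via the preceding corollary handles all cases cleanly.
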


\section{Metatiles and digraphs}\label{s:meta}

The simplest metatiles when tiling with squares ($S$) and
$(1,m-1;t)$-combs ($C$) are the \textit{free square} ($S$), what we will refer
to as an \textit{$m$-comb} ($C^m$) which is $m$ interlocking combs
with no gaps, and the \textit{filled comb} ($CS^{(m-1)(t-1)}$) which
is a comb with all the gaps filled with squares. The $m=2$, $t=3$
instances of these are the first three metatiles depicted in
Fig.~\ref{f:meta}(a).

\begin{figure}
\begin{center}
\includegraphics[width=15cm]{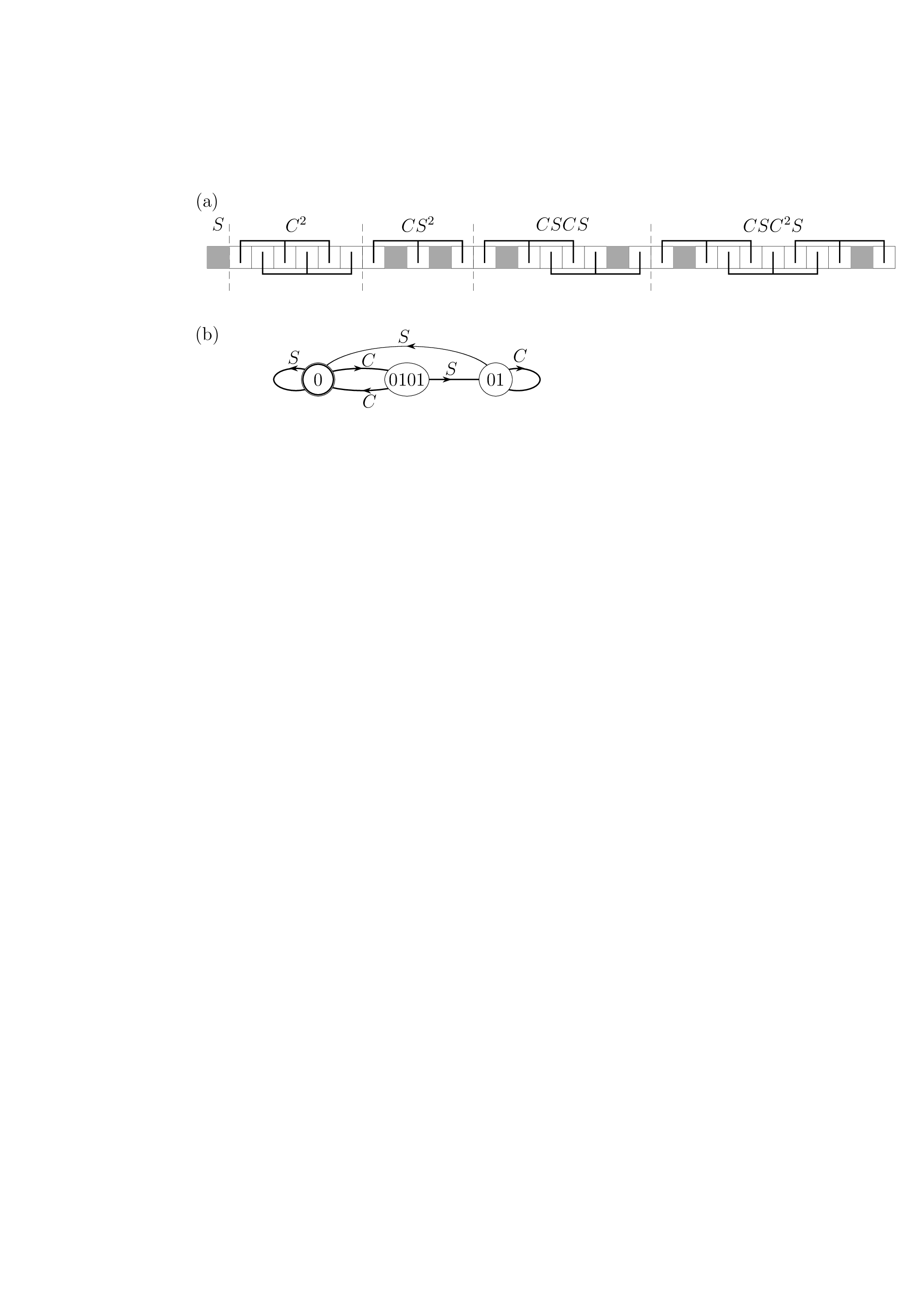}
\end{center}
\caption{Metatiles when tiling with squares and $(1,1;3)$-combs ($m=2$,
  $t=3$). (a)~A 31-board tiled with all the metatiles containing less
  than 6 tiles. Shaded (white) cells are occupied by squares
  (combs). Bold lines indicate which teeth belong to the same comb.
  Dashed lines show boundaries between metatiles. The symbolic
  representation is above each metatile. (b)~The digraph for
  generating metatiles.}
\label{f:meta}
\end{figure}

When $m=1$, the only metatiles are the two individual tiles
themselves: a square and a comb, which, as the gaps are of zero width,
is just a $t$-omino. When $m>1$, the only case when there is a finite
number of metatiles is when $t=2$ \cite{EA21}.  There are two cases
when there is a single infinite sequence of metatiles: the
$(m,t)=(3,2)$ case, which was dealt with in AE22, and when $m=2$ and
$t=3$. In the latter case, the metatiles are $S$, $C^2$, and $CSC^jS$
for $j\ge0$, as illustrated in Fig.~\ref{f:meta}(a).  This infinite
sequence of metatiles is analogous to that found for the $(m,t)=(3,2)$
case \cite[\S6]{AE22}: $CS$ has a single remaining unit-width
slot which can be filled either with an $S$, thus completing the metatile, or
with the left tooth of a $C$ (to give $CSC$) which again results in a
slot of unit width.

For a particular choice of types of tiles, a systematic way to
generate all metatiles and, in the simpler cases, obtain finite-order
recursion relations for the number of tilings is via a directed
pseudograph (henceforth referred to as a \textit{digraph}) in which
each arc represents the addition of a tile and each node represents
the current state of the yet-to-be-completed metatile
\cite{EA15,EA20}.  Any such digraph contains a \textit{0 node} which
represents the empty board or the completed metatile. The remaining
nodes are named using binary strings: the $i$-th digit of the string
is 0 (1) if the $i$-th cell, starting at the first unoccupied cell of
the incomplete metatile and ending at its last occupied cell, is empty
(filled). Thus all nodes (except the 0 node) start with 0 and end with
1. There is a bijection between each possible metatile and each path
on the digraph which starts and finishes at the 0 node without
visiting it in between. To obtain the symbolic representation of the
metatile, one simply reads off the names of the arcs along the path
and then simplifies the resulting expression by, for example,
replacing $CC$ by $C^2$. The digraph for generating metatiles when
tiling with squares and $(1,1;3)$-combs is shown in
Fig.~\ref{f:meta}(b).

\section{Further identities}\label{s:idn}

We start by deriving identities that apply to all the triangles and
later on obtain recursion relations for some particular instances of the
triangles after constructing the corresponding metatile-generating digraphs. 
The following three identities arise from considering the simplest
types of $n$-tile tilings.

\begin{identity}\label{I:chb=1}
For $n\ge0$, $m\ge1$, and $t\ge2$, $\tchb{n}{0}_{m,t}=1$.
\end{identity}
\begin{proof}
There is only one way to create an $n$-tile tiling without using any
combs, namely, the all-square tiling.
\end{proof}

\begin{identity}
For $n\ge0$, $m\ge1$, and $t\ge2$, 
$\tchb{n}{n}_{m,t}=\delta_{n\bmod m,0}$.
\end{identity}
\begin{proof}
The only way to tile without squares is the all $m$-comb tiling which
can only occur if the number of tiles is a multiple of $m$.
\end{proof}

\begin{identity}
For $n,m\ge1$ and $t\ge2$, 
\[
\chb{n}{1}{m,t}=
\begin{cases}0,&\text{if $n<(m-1)(t-1)+1$};
\\n-(m-1)(t-1),&\text{otherwise}.\end{cases}
\]
\end{identity}
\begin{proof}
Any $n$-tile tiling using exactly one $(1,m-1;t)$-comb must have a
filled comb which itself contains $(m-1)(t-1)+1$ tiles. Thus there can
be no $n$-tile tilings using 1 comb that use less than this number of
tiles. If $n\ge (m-1)(t-1)+1$, the tiling consists of a filled comb
and $n-(m-1)(t-1)-1$ free squares which gives a total of
$n-(m-1)(t-1)$ metatile positions in which the filled comb can be
placed.
\end{proof}

The pattern of zeros seen in the triangles is a result of the
following identity.
\begin{identity}
For $j\ge1$, $m,t\ge2$, $p=1,\ldots,m-1$, and $r=1-(t-2)p,\ldots,p$, 
\[
\chb{mj-r}{mj-p}{m,t}=0.
\]
\end{identity}
\begin{proof}
We first derive an expression for $K$, the maximum number of combs
that can be used in the tiling of an $(mJ+R)$-board where
$R=0,\ldots,m-1$.  From Lemma~\ref{L:bij}, $K$ is also the maximum
number of $t$-ominoes that can be used in the tiling of $R$
$(J+1)$-boards and $m-R$ $J$-boards. Then it is straightforward to show that
\begin{equation}\label{e:K} 
K=\begin{cases}\displaystyle\frac{m\bigl(J-(J\bmod t)\bigr)}{t}, 
& \text{if $J\bmod t<t-1$};\\
 \displaystyle\frac{m(J-t+1)}{t}+R,
& \text{if $J\bmod t=t-1$}.\end{cases}
\end{equation} 
From Identity~\ref{I:ch=chb}, 
\[
\chb{mj-r}{mj-p}{m,t}=\ch{tmj-r-(t-1)p}{mj-p}_{m,t}.
\]
Writing $tmj-r-(t-1)p$ in the form $mJ+R$, if $J=tj-s$ where
$s=1,\ldots,t$ then $R=sm-r-(t-1)p$. The condition that $0\leq R<m$
gives $(s-1)m<r+(t-1)p\leq sm$. This condition is compatible with the
minimum and maximum values $r+(t-1)p$ can take which are,
respectively, 2 and $tm$.
From \eqref{e:K} we find for $s>1$
that $K=m(j-1)$ which is always less than $mj-p$. When $s=1$,
$K=mj-p-r-(t-2)p$. Since $r+(t-2)p\geq1$ we have $K<mj-p$ in this case
as well.
\end{proof}

The following identity explains the entries that appear at the
vertical boundaries of the nonzero parts of the triangles and start
with rising powers of ascending positive integers.
Identities~\ref{I:j^p} and \ref{I:binom^p} reduce to Identity~24 of
AE22 when $t=2$.
\begin{identity}\label{I:j^p}
For $j,m\ge1$, $t\ge2$, $s=0,\ldots,t-2$, and $r=0,\ldots,m$, 
\begin{align*} 
\chb{m(j+s-1)+r}{m(j-1)}{m,t}&=\binom{j+s-1}{s}^{m-r}\binom{j+s}{s+1}^r\\
&=\begin{cases}
j^r, & \text{if $s=0$};\\
\biggl(\dfrac{j(j+1)\cdots(j+s-1)}{s!}\biggr)^m
\biggl(\dfrac{j+s}{s+1}\biggr)^r,& \text{if $s>0$}.
\end{cases}
\end{align*} 
\end{identity}
\begin{proof}
From Identity~\ref{I:ch=chb},
\[ 
\chb{m(j+s-1)+r}{m(j-1)}{m,t}=\ch{m(t(j-1)+s)+r}{m(j-1)}_{m,t}.
\]
By Lemma~\ref{L:bij}, this is the number of ways to tile $m-r$ boards
of length $t(j-1)+s$ and $r$ boards of length $t(j-1)+s+1$ with $m(j-1)$
$t$-ominoes (and $sm+r$ squares). As $s+1<t$, each of the $m$ boards
always contains exactly $j-1$ $t$-ominoes. A board of
length $t(j-1)+s$ has $s$ squares and so there are $j+s-1$ metatile
positions in which to put the squares (the rest being filled by
$t$-ominoes) and thus $\tbinom{j+s-1}{s}$ ways to tile it. Likewise,
a board of length $t(j-1)+s+1$ has $j+s$ metatile positions and so
there are $\tbinom{j+s}{s+1}$ ways to tile it. The result follows from
the numbers of each type of board.
\end{proof}

The next identity explains the rising
powers of integers on non-vertical rays of entries at the
boundaries of the nonzero parts of the triangles.
\begin{identity}\label{I:binom^p}
For $m,j\ge1$, $t\ge2$, and $p=0,\ldots,m$, 
\[
\chb{mj+(t-2)p}{mj-p}{m,t}=\binom{j+t-2}{t-1}^p.
\]
\end{identity}
\begin{proof}
 From Identity~\ref{I:ch=chb} we have
\[
\chb{mj+(t-2)p}{mj-p}{m,t}
=\ch{tmj-p}{mj-p}_{m,t}=\ch{m(jt-1)+m-p}{(m-p)j+p(j-1)}_{m,t},
\]
which is also the number of ways to tile $m-p$ $jt$-boards and $p$
boards of length $jt-1$ using $(m-p)j+p(j-1)$ $t$-ominoes and $(t-1)p$
squares. The $jt$-boards are completely filled by $j$ $t$-ominoes and
the $p$ $(jt-1)$-boards each have $j-1$ $t$-ominoes and $t-1$
squares. As on these $p$ shorter boards there are $j+t-2$ tiles in total,
there are $\tbinom{j+t-2}{t-1}$ ways to tile each of them which leads
to a total of $\tbinom{j+t-2}{t-1}^p$ tilings for the set of boards.
\end{proof}

The following two identities are generalizations of Identities~25 and
26 in AE22.
\begin{identity}
For $j,m\ge1$ and $t\ge2$, 
\[
\chb{mj+t-1}{mj-1}{m,t}=m\binom{j+t-1}{t}.
\]
\end{identity}
\begin{proof}
From Identity~\ref{I:ch=chb}, $\tchb{mj+t-1}{mj-1}_{m,t}=\tch{tmj}{mj-1}_{m,t}$,
which, from Lemma~\ref{L:bij}, is the number of ways to tile an
$m$-tuple of $jt$-boards with $mj-1$ $t$-ominoes and $t$ squares. As the
length of each board is a multiple of $t$, all the squares must lie on the same
board. On such a board there are $j-1$ $t$-ominoes and $t$ squares
making $j+t-1$ tiles in total. Hence 
there are $\tbinom{j+t-1}{t}$ possible ways to tile it. 
As there are $m$ possible boards on which to place all the squares, the
result follows.
\end{proof}

\begin{identity}
For $t\ge2$ and $m,j\ge1$ provided $mj\geq2$, 
\[
\chb{mj+2(t-1)}{mj-2}{m,t}=\begin{cases}
\displaystyle\binom{m}{2}, & \text{if $j=1, m>1$};\\
\displaystyle m\binom{j+2(t-1)}{2t}+\binom{m}{2}\binom{j+t-1}{t}^2, 
& \text{if $m,j>1$},\\
\displaystyle \binom{j+2(t-1)}{2t}, 
& \text{if $m=1, j>1$}.\\
\end{cases}
\]
\end{identity}
\begin{proof}
From Identity~\ref{I:ch=chb},
$\tchb{mj+2(t-1)}{mj-2}_{m,t}=\tch{tmj}{mj-2}_{m,t}$, which, from
Lemma~\ref{L:bij}, is the number of ways to tile an $m$-tuple of
$jt$-boards with $mj-2$ $t$-ominoes and $2t$ squares.  If $j>1$, all
$2t$ squares can be on the same $jt$-board which, with the $j-2$
$t$-ominoes on that board, makes $j-2+2t$ tiles in total and hence
$\tbinom{j+2(t-1)}{2t}$ tilings of it. With $m$ boards to choose from,
this gives the first term on the right-hand sides of the identity when
$j>1$. Otherwise, if $m>1$, two of the boards have $t$ squares each. There are
$\tbinom{j+t-1}{t}$ ways to tile each of those boards and $\tbinom{m}{2}$
ways to choose them.
\end{proof}

The following identity is a generalization of the previous two. 
\begin{identity}
For $s\ge1$, $t\ge2$, and $m,j\ge1$ provided $mj\ge s$, 
\[
\chb{mj+s(t-1)}{mj-s}{m,t}=
\sum_{\substack{r_i\ge1;\\r_1+\cdots+r_p=s}}
\!\!\!\binom{m}{p}\prod_{i=1}^p\binom{j+r_i(t-1)}{r_it},
\]
where the sum is over compositions of $s$, $p$ is the number of parts
of the composition, and $\tbinom{a}{b}$ is understood to equal zero if
$a<b$.
\end{identity}
\begin{proof}
From Identity~\ref{I:ch=chb},
$\tchb{mj+s(t-1)}{mj-s}_{m,t}=\tch{tmj}{mj-s}_{m,t}$, which, from
Lemma~\ref{L:bij}, is the number of ways to tile an $m$-tuple of
$jt$-boards with $mj-s$ $t$-ominoes and $st$ squares. We partition the
squares into $p$ parts of sizes $r_it$ where $r_i\in\Zset^+$ 
such that $r_1+\cdots+r_p=s$.
A $jt$-board containing $r_it$ squares has $j-r_i$ $t$-ominoes and
thus $j-r_i+r_it$ tiles in total and so $\tbinom{j+r_i(t-1)}{r_it}$
possible ways to tile it. There are $\binom{m}{p}$ ways to choose
which of the $m$ boards have any squares.
\end{proof}

In order to truly deserve to be called a Pascal-like triangle, a
triangle ought to have a portion where Pascal's recurrence is
obeyed. We now show that this is the case for our triangles by using a
result from a study on restricted combinations \cite{MS08} to extend
and prove Conjecture~30 of AE22.
\begin{theorem}
For integers $k\ge0$, $m\geq1$, $t\geq2$, and $n>(m-1)(t-1)k$, 
\begin{equation}\label{e:PRchb} 
\chb{n}{k}{m,t}=\chb{n-1}{k}{m,t}+\chb{n-1}{k-1}{m,t}.
\end{equation} 
\end{theorem}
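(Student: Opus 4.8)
The plan is to transfer \eqref{e:PRchb} into a statement about the restricted combination numbers $S^{(m,t)}(n,k)$ and then invoke a recurrence for the latter coming from \cite{MS08}. Using Corollary~\ref{C:S=chb} in the form $\chb{n'}{k}{m,t}=S^{(m,t)}\bigl(n'-(t-1)(m-k),k\bigr)$ and setting $N:=n-(t-1)(m-k)$, one checks readily that $\chb{n}{k}{m,t}=S^{(m,t)}(N,k)$, $\chb{n-1}{k}{m,t}=S^{(m,t)}(N-1,k)$, and $\chb{n-1}{k-1}{m,t}=S^{(m,t)}(N-t,k-1)$, so that \eqref{e:PRchb} is equivalent to
\[
S^{(m,t)}(N,k)=S^{(m,t)}(N-1,k)+S^{(m,t)}(N-t,k-1),
\]
while the hypothesis $n>(m-1)(t-1)k$ becomes exactly $N>m(t-1)(k-1)$. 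The degenerate cases $k=0$ and $k=1$ I would dispatch separately and directly, the former from Identity~\ref{I:chb=1} and the latter from the closed form for $\chb{n}{1}{m,t}$ established above (checking in particular the boundary value $n=(m-1)(t-1)+1$ so that the range in the theorem is sharp).

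For the displayed recurrence I would condition a valid $k$-subset $A\subseteq\Nset_N$ (one no two of whose elements differ by a member of $\mathcal{Q}=\{m,2m,\ldots,(t-1)m\}$) on whether $N\in A$. If $N\notin A$ then $A\subseteq\Nset_{N-1}$, contributing $S^{(m,t)}(N-1,k)$. If $N\in A$ then $A\setminus\{N\}$ is a valid $(k-1)$-subset of $\Nset_{N-1}\setminus\{N-m,N-2m,\ldots,N-(t-1)m\}$, and conversely adjoining $N$ to any such subset yields a valid $k$-subset containing $N$; so the point is to show that, in the stated range of $N$, the number of such subsets equals $S^{(m,t)}(N-t,k-1)$. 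This is precisely the content (in the relevant special case) of the recurrence for restricted combinations proved by Mansour and Shattuck in \cite{MS08}. An alternative I would keep in reserve is to substitute their explicit expression for $S^{(m,t)}$ as a sum of products of binomial coefficients (equivalently, the residue-class decomposition behind Lemma~\ref{L:bij} and Theorem~\ref{T:poly}) into all three terms and reduce the claim to the ordinary Pascal identity $\binom{b}{\kappa}=\binom{b-1}{\kappa}+\binom{b-1}{\kappa-1}$ applied to the single factor indexed by the residue class of $N$; this works but forces a split into subcases according to $N\bmod m$ and the residue of $N-t$.

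The main obstacle is this second case together with pinning down the exact threshold. Deleting the $t-1$ cells $N-m,\ldots,N-(t-1)m$ removes $t-1$ elements all in the one residue class $N\bmod m$, whereas passing from $\Nset_{N-1}$ to $\Nset_{N-t}$ strips the top element off each of the $t-1$ residue classes $N-1,\ldots,N-t+1\pmod m$; consequently the obvious order-preserving ``compression'' of $\Nset_{N-1}\setminus\{N-m,\ldots,N-(t-1)m\}$ onto $\Nset_{N-t}$ mixes residue classes and does not respect the forbidden-difference structure, so one genuinely needs the finer bookkeeping of \cite{MS08} (or the case-by-case binomial computation). One must then verify that $N>m(t-1)(k-1)$, i.e.\ $n>(m-1)(t-1)k$, is exactly where the recurrence first holds: for smaller $n$ it breaks down, consistent with the block of zeros bordering the leading nonzero diagonal visible in Figs.~\ref{f:m=2,t=3}--\ref{f:m=3,t=3}.
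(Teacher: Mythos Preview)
Your proposal is correct and follows essentially the same route as the paper: translate via Corollary~\ref{C:S=chb} to the recurrence $S^{(m,t)}(N,k)=S^{(m,t)}(N-1,k)+S^{(m,t)}(N-t,k-1)$ with $N=n-(t-1)(m-k)$, observe that $n>(m-1)(t-1)k$ is exactly $N>m(t-1)(k-1)$, treat $k=0$ separately, and cite \cite{MS08} for the rest. Two small remarks: the authors of \cite{MS08} are Mansour and \emph{Sun} (not Shattuck), and the paper notes that the published bound $N\ge m(t-1)(k-1)$ in \cite{MS08} should in fact be strict, so you should flag that correction as well.
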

\begin{proof}
The result holds for $k=0$ since $\tchb{n\ge0}{1}_{m,t}=1$ by
Identity~\ref{I:chb=1} and $\tchb{n<0}{k}_{m,t}=\tchb{n}{k<0}_{m,t}=0$
by definition.  Mansour and Sun give the result in Theorem~3.5 of
their paper~\cite{MS08}, when rewritten in our own notation, that for
any integers $m,k\geq1$, and $t\geq2$,
\begin{equation}\label{e:Srr} 
S^{(m,t)}(N,k)=S^{(m,t)}(N-1,k)+S^{(m,t)}(N-t,k-1),
\end{equation} 
provided that $N\geq m(t-1)(k-1)$. However, the condition for this relation
between numbers of subsets to hold should read $N>m(t-1)(k-1)$
(personal communication with Mansour). By Corollary~\ref{C:S=chb},
$n=N+(t-1)(m-k)$ and we can rewrite \eqref{e:Srr} as
\[
\chb{N+(t-1)(m-k)}{k}{m,t}=
\chb{N+(t-1)(m-k)-1}{k}{m,t}+
\chb{N+(t-1)(m-k+1)-t}{k-1}{m,t}
\]
which reduces to \eqref{e:PRchb}. The corrected condition becomes
$n-(t-1)(m-k)>m(t-1)(k-1)$ which gives the condition in our theorem.
\end{proof}

We now turn to obtaining recursion relations for particular instances
of the triangles. For all but the last triangle we consider, we
require the following theorem which extends a result proved elsewhere
for tilings of an $n$-board when the digraph has a common node
\cite[Theorem~5.4 and Identity~5.5]{EA15} to also include $n$-tile
tilings of boards.

\begin{theorem}\label{T:CN}
For a digraph possessing a common node, let $l_{\mathrm{o}i}$ be the
length of the $i$-th outer cycle ($i=1,\ldots,N\rb{o}$), let $L_r$ be
the length of the $r$-th inner cycle ($r=1,\ldots,N$) and let $K_r$ be the
number of combs it contains, and let $l_{\mathrm{c}i}$ be the length of
the $i$-th common circuit ($i=1,\ldots,N\rb{c}$) and
let $k_{\mathrm{c}i}$ be the number of combs it contains. Then for all
integers $n$ and $k$, 
\begin{align} 
\label{e:CNBn}
B_n&=\delta_{n,0}+ 
\sum_{r=1}^N (B_{n-L_r}-\delta_{n,L_r})+
\sum_{i=1}^{N\rb{o}}
\biggl(B_{n-l_{\mathrm{o}i}}-\sum_{r=1}^NB_{n-l_{\mathrm{o}i}-L_r}\biggr)
+\sum_{i=1}^{N\rb{c}} B_{n-l_{\mathrm{c}i}},\\
B_{n,k}&=\delta_{n,0}\delta_{k,0}+ 
\sum_{r=1}^N (B_{n-L_r,k-K_r}-\delta_{n,L_r}\delta_{k,K_r})+
\sum_{i=1}^{N\rb{o}}
\biggl(B_{n-l_{\mathrm{o}i},k-k_{\mathrm{o}i}}
-\sum_{r=1}^NB_{n-l_{\mathrm{o}i}-L_r,k-k_{\mathrm{o}i}-K_r}\biggr)\nonumber\\
&\qquad\mbox{}+\sum_{i=1}^{N\rb{c}} B_{n-l_{\mathrm{c}i},k-k_{\mathrm{c}i}},
\label{e:CNBnk}
\end{align} 
where $B_{n<0}=B_{n,k<0}=B_{n<k,k}=0$. If the lengths of the cycles
and circuits are calculated as the number of tiles (the total contribution
made to the number of cells occupied) then
$B_n$ is the number of $n$-tile tilings (the number of tilings of an
$n$-board) and $B_{n,k}$ is the number of such tilings that use $k$ combs.
\end{theorem}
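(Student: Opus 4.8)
The plan is to pass to generating functions. Introduce the bivariate series $B(x,y)=\sum_{n,k}B_{n,k}\,x^ny^k$ together with the auxiliary polynomials $P(x,y)=\sum_{r=1}^{N}x^{L_r}y^{K_r}$, $Q(x,y)=\sum_{i=1}^{N\rb{o}}x^{l_{\mathrm{o}i}}y^{k_{\mathrm{o}i}}$, and $R(x,y)=\sum_{i=1}^{N\rb{c}}x^{l_{\mathrm{c}i}}y^{k_{\mathrm{c}i}}$, where the exponent of $x$ records the length of a cycle or circuit (interpreted throughout either as the number of cells or, uniformly instead, as the number of tiles) and the exponent of $y$ records the number of combs it contains. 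All three series lie in $\Zset[[x,y]]$ and have no constant term, since every cycle and circuit has positive length. The whole theorem is then obtained by extracting coefficients from a single closed form for $B(x,y)$.

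First I would establish the structural decomposition underlying the common-node hypothesis, which is the content of \cite[Theorem~5.4]{EA15} refined to keep track of $y$: a closed walk based at the $0$ node that does not revisit $0$ in between (a metatile) is either an outer cycle, or else it visits the common node $C$, in which case it is a common circuit with an arbitrary, possibly empty, sequence of inner cycles spliced in at its visit to $C$; moreover this description is unique. Since an arbitrary closed walk at $0$ is a concatenation of metatiles, and since both the length and the comb count of a walk are additive over the cycles and circuits composing it, the generating function factorizes as $B=1/\bigl(1-M\bigr)$ with metatile generating function $M=Q+R/(1-P)$, the term $R/(1-P)=R(1+P+P^2+\cdots)$ encoding a common circuit carrying a sequence of inner cycles at $C$. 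Because $M$ has no constant term, $B$ is a well-defined power series.

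Next I would clear denominators: from $B\bigl[(1-P)(1-Q)-R\bigr]=1-P$ one gets $B=1-P+BP+BQ-BPQ+BR$. Applying the coefficient operator $[x^ny^k]$ to both sides, using $[x^ny^k](BP)=\sum_r B_{n-L_r,k-K_r}$ and the analogous identities for $BQ$, $BPQ$, $BR$, and $P$, yields $B_{n,k}=\delta_{n,0}\delta_{k,0}-\sum_r\delta_{n,L_r}\delta_{k,K_r}+\sum_r B_{n-L_r,k-K_r}+\sum_i B_{n-l_{\mathrm{o}i},k-k_{\mathrm{o}i}}-\sum_{r,i}B_{n-L_r-l_{\mathrm{o}i},k-K_r-k_{\mathrm{o}i}}+\sum_i B_{n-l_{\mathrm{c}i},k-k_{\mathrm{c}i}}$, valid with the conventions $B_{n<0}=B_{n,k<0}=B_{n<k,k}=0$. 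Regrouping the $\sum_r$ terms with the Kronecker deltas and the outer-cycle terms with their nested $\sum_r$ subtraction gives exactly \eqref{e:CNBnk}; setting $y=1$, equivalently dropping the $y$-grading, collapses this to \eqref{e:CNBn}. The final remark is that the choice ``length measured in cells'' versus ``length measured in tiles'' enters only through the $x$-exponents in $P,Q,R$; the decomposition and every algebraic step are otherwise unchanged, which is precisely the claimed extension of the result of \cite{EA15}.

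The main obstacle is the structural decomposition step, not the algebra. One must argue from the definition of a common node that $C$ behaves as a cut node which isolates the inner cycles: every metatile through $C$ is a common circuit with a spliced sequence of inner cycles, in one and only one way, and inner-cycle excursions at $C$ cannot interleave with the $0$-level outer-cycle structure. Once that bijective fact is secured, the passage to the generating-function identity and the extraction of the recursions \eqref{e:CNBn} and \eqref{e:CNBnk} are routine bookkeeping.
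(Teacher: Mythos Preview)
Your argument is correct. Note, however, that the paper does not actually supply a proof of this theorem: it is stated as an extension of \cite[Theorem~5.4 and Identity~5.5]{EA15} to $n$-tile tilings and to the bivariate count tracking combs, and is then used without further justification. So there is no ``paper's own proof'' to compare against; your generating-function derivation provides more detail than the paper itself.

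The decomposition $M=Q+R/(1-P)$ is precisely the structural content of the cited result in \cite{EA15}, and you rightly flag it as the only substantive step. One point you could make explicit to close the argument: the reason the $0\to C$ and $C\to 0$ segments of a metatile through $C$ are genuinely simple (hence together form a common circuit) is that any repeated intermediate vertex would yield a closed subwalk avoiding both $0$ and $C$, which would have to be a concatenation of inner cycles, each of which passes through $C$ by the common-node hypothesis --- a contradiction. The same reasoning shows that a metatile avoiding $C$ is necessarily simple, hence an outer cycle, and that the closed walk at $C$ sandwiched between the two simple paths factors uniquely as a sequence of inner cycles. Once that is in hand, the algebra from $B(1-M)=1$ through $B=1-P+BP+BQ-BPQ+BR$ to the coefficient identity is exactly as you describe, and your observation that the comb grading and the cells-versus-tiles convention enter only through the exponents in $P$, $Q$, $R$ is indeed the paper's entire justification for the word ``extends''.
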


In the proofs of Identities \ref{I:rr23}, \ref{I:B23}, \ref{I:rr24},
\ref{I:B24}, \ref{I:rr42}, and \ref{I:B42} which use
Theorem~\ref{T:CN} (and Identities \ref{I:rr25} and \ref{I:B25} which
use Theorem~\ref{T:PCN}), the lengths of the cycles and circuits are
the number of tiles they contain. In the proofs of Identities
\ref{I:A23}, \ref{I:A24}, \ref{I:A42}, and \ref{I:A25}, the lengths of
the cycles and circuits are the total number of cells that the tiles
along the arcs occupy. An $S$ occupies 1 cell whereas a $C$ occupies
$t$ cells. Thus if $L$ is the length of a cycle or circuit containing
$K$ combs when finding the recursion relations for $n$-tile tilings
then $L'=L+(t-1)K$ is the length of that cycle or circuit when the
recursion relations are for the tilings of an $n$-board.

\begin{identity}\label{I:rr23}
For all $n,k\in\mathbb{Z}$,
\begin{multline}\label{e:rr23}
\chb{n}{k}{2,3}=\delta_{n,0}\delta_{k,0}-\delta_{n,1}\delta_{k,1}
+\chb{n-1}{k}{2,3}+\chb{n-1}{k-1}{2,3}
-\chb{n-2}{k-1}{2,3}+\chb{n-2}{k-2}{2,3}\\
+\chb{n-3}{k-1}{2,3}-\chb{n-3}{k-3}{2,3}.
\end{multline} 
\end{identity}
\begin{proof}
The digraph for tiling with squares and $(1,1;3)$-combs has a single
inner cycle connecting the 01 node to itself by a $C$
(Fig.~\ref{f:meta}(b)). Hence 01 is the common node and
$L_1=K_1=1$. There are 2 outer cycles ($S$ and $C^2$) and so
$l_{\mathrm{o}1}=1$, $k_{\mathrm{o}1}=0$, and
$l_{\mathrm{o}2}=k_{\mathrm{o}2}=2$. There is a single common circuit
 ($CS^2$) which gives $l_{\mathrm{c}1}=3$ and $k_{\mathrm{c}1}=1$.
\end{proof}

\begin{identity}\label{I:B23}
If $B_n$ is the sum of the $n$-th row of $\tchb{n}{k}_{2,3}$ then for
all $n$,
\[
B_n=\delta_{n,0}-\delta_{n,1}-\delta_{n,2}+2B_{n-1},
\]
where $B_{n<0}=0$.
\end{identity}
\begin{proof}
Sum each term in \eqref{e:rr23} over all $k$ or use \eqref{e:PCNBn}.
\end{proof}
As defined above,
$(B_n)_{n\ge0}=1,1,2,4,8,16,32,64,128,256,\ldots$ is \seqnum{A011782}.

\begin{identity}\label{I:A23}
If $A_n$ is the sum of the $n$-th $(1,2)$-antidiagonal of
$\tchb{n}{k}_{2,3}$ then for all $n$,
\[
A_n=\delta_{n,0}-\delta_{n,3}+A_{n-1}+A_{n-3}-A_{n-4}+A_{n-5}+A_{n-6}-A_{n-9},
\]
where $A_{n<0}=0$.
\end{identity}
\begin{proof}
By Identity~\ref{I:ch=chb}, the $n$-th $(1,2)$-antidiagonal of
$\tchb{n}{k}_{2,3}$ is the $n$-th row of $\tch{n}{k}_{2,3}$.  
From the definition of the latter triangle, $A_n$ is the number of
tilings of an $n$-board using squares and $(1,1;3)$-combs
and is given by \eqref{e:PCNBn} (with $B_n$ replaced by $A_n$) 
applied to the same digraph as in the proof of Identity~\ref{I:rr23}
but with the following changes made to the lengths: 
$L_1=3$, $l_{\mathrm{o}2}=6$, $l_{\mathrm{c}1}=5$.
\end{proof}
As defined above,
$(A_n)_{n\ge0}=1,1,1,1,1,2,4,6,9,12,16,24,36,54,81,117,\ldots$ 
is \seqnum{A224809}. From
Corollary~\ref{C:A}, $A_n$ is the number of subsets of
$\Nset_{n-4}$ chosen so that no two elements differ by 2 or 4.

\begin{figure}[!b]
\begin{center}
\includegraphics[width=10cm]{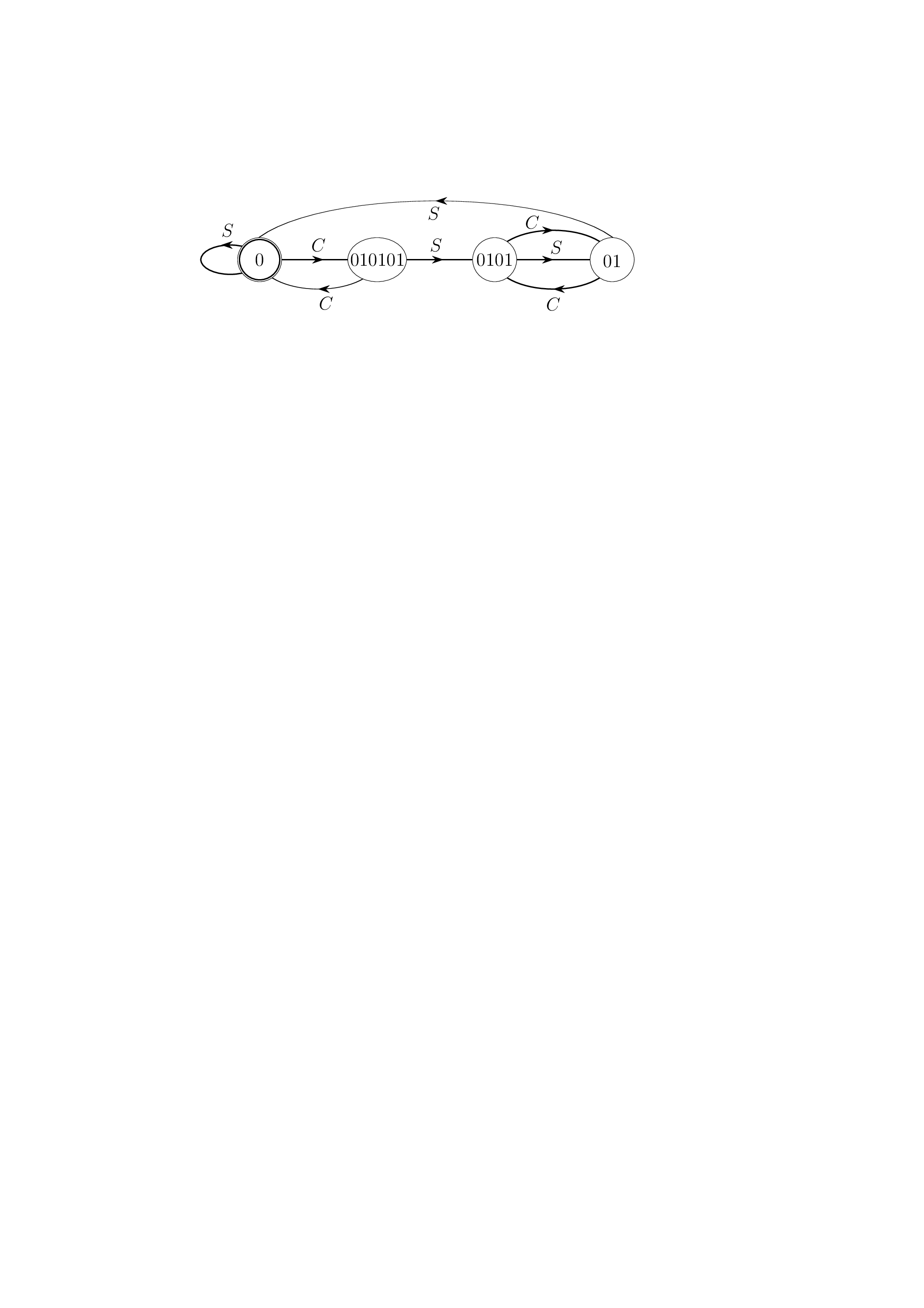}
\end{center}
\caption{Digraph for generating metatiles when tiling with squares and
  $(1,1;4)$-combs ($m=2$, $t=4$).}
\label{f:24}
\end{figure}

\begin{identity}\label{I:rr24}
For all $n,k\in\mathbb{Z}$,
\begin{multline}\label{e:rr24}
\chb{n}{k}{2,4}=\delta_{n,0}\delta_{k,0}-\delta_{n,2}(\delta_{k,1}
+\delta_{k,2})
+\chb{n-1}{k}{2,4}+\chb{n-2}{k-1}{2,4}+2\chb{n-2}{k-2}{2,4}\\
-\chb{n-3}{k-1}{2,4}-\chb{n-3}{k-2}{2,4}
+\chb{n-4}{k-1}{2,4}+\chb{n-4}{k-2}{2,4}
-\chb{n-4}{k-3}{2,4}-\chb{n-4}{k-4}{2,4}.
\end{multline} 
\end{identity}
\begin{proof}
The digraph for tiling with squares and $(1,1;4)$-combs has 2 inner
cycles ($SC$ and $C^2$) both of which pass though the 0101 and 01
nodes (Fig.~\ref{f:24}). We choose 0101 as the common node.  We see that $L_1=L_2=2$,
$K_1=1$, and $K_2=2$. There are 2 outer cycles ($S$ and $C^2$) and so
$l_{\mathrm{o}1}=1$, $k_{\mathrm{o}1}=0$, and
$l_{\mathrm{o}2}=k_{\mathrm{o}2}=2$. There are 2 common circuits:
$CS\{S,C\}S$ where $X\{Y,Z\}$ means $XY$ and $XZ$. Hence
$l_{\mathrm{c}1}=l_{\mathrm{c}2}=4$, $k_{\mathrm{c}1}=1$, and
$k_{\mathrm{c}2}=2$.
\end{proof}

\begin{identity}\label{I:B24}
If $B_n$ is the sum of the $n$-th row of $\tchb{n}{k}_{2,4}$ then for
all $n$,
\[
B_n=\delta_{n,0}-2\delta_{n,2}+B_{n-1}+3B_{n-2}-2B_{n-3},
\]
where $B_{n<0}=0$.
\end{identity}
\begin{proof}
Sum each term in \eqref{e:rr24} over all $k$ or use \eqref{e:PCNBn}.
\end{proof}
As defined above,
$(B_n)_{n\ge0}=1,1,2,3,7,12,27,49,106,199,419,\ldots$ is \seqnum{A099163}.

The proofs of the following identity and Identities~\ref{I:A42} and
\ref{I:A25} are analogous to that of Identity~\ref{I:A23}. We just need
to find the modified lengths of the cycles and circuits in the digraph
before using the theorem giving the recursion relation.
 
\begin{identity}\label{I:A24}
If $A_n$ is the sum of the $n$-th $(1,3)$-antidiagonal of
$\tchb{n}{k}_{2,4}$ then for all $n$,
\[
A_n=\delta_{n,0}-\delta_{n,5}-\delta_{n,8}+
A_{n-1}+A_{n-5}-A_{n-6}+A_{n-7}+2A_{n-8}-A_{n-9}+A_{n-10}-A_{n-13}-A_{n-16},
\]
where $A_{n<0}=0$.
\end{identity}
\begin{proof}
We use the same digraph and associated parameters
as in the proof of Identity~\ref{I:rr24}
except that
$L_1=5$, $L_2=8$, $l_{\mathrm{o}2}=8$, $l_{\mathrm{c}1}=7$, 
and $l_{\mathrm{c}2}=10$.
\end{proof}
As defined above,
$(A_n)_{n\ge0}=1,1,1,1,1,1,1,2,4,6,9,12,16,20,25,35,\ldots$
is \seqnum{A224808}. From
Corollary~\ref{C:A}, $A_n$ is the number of subsets of
$\Nset_{n-6}$ chosen so that no two elements differ by 2, 4, or 6.

\begin{identity}\label{I:rr42}
For all $n,k\in\mathbb{Z}$,
\zmlg
\begin{multline}\label{e:rr42}
\chb{n}{k}{4,2}=\delta_{n,0}\delta_{k,0}-\delta_{n,2}\delta_{k,1}
-\delta_{n,3}\delta_{k,2}-\delta_{n,4}\delta_{k,4}
+\chb{n-1}{k}{4,2}+\chb{n-2}{k-1}{4,2}
-\chb{n-3}{k-1}{4,2}+\chb{n-3}{k-2}{4,2}\\
+\chb{n-4}{k-1}{4,2}+\chb{n-4}{k-3}{4,2}+2\chb{n-4}{k-4}{4,2}
+\chb{n-5}{k-2}{4,2}+2\chb{n-5}{k-3}{4,2}-\chb{n-5}{k-4}{4,2}\\
-\chb{n-6}{k-3}{4,2}-\chb{n-6}{k-5}{4,2}
-\chb{n-7}{k-4}{4,2}-\chb{n-7}{k-5}{4,2}-\chb{n-7}{k-6}{4,2}
-\chb{n-8}{k-7}{4,2}-\chb{n-8}{k-8}{4,2}.
\end{multline} 
\rmlg
\end{identity}
\begin{proof}
The digraph for tiling with squares and $(1,3;2)$-combs (which are
also called $(1,3)$-fences) has 3 inner cycles all of which contain
the nodes 001 and 01 (Fig.~\ref{f:42}). We choose 001 as the common
node. The cycles, given as lists of arcs starting from 001, are
$\{S,C\{S,C^2\}\}C$. Hence $L_i=2,3,4$ and $K_i=1,2,4$, respectively,
for $i=1,2,3$. There are 5 outer cycles:
$S,C^2\{S\{S,CS\},C\{S,C\}\}$. Thus $l_{\mathrm{o}i}=1,4,5,4,4$ and
$k_{\mathrm{o}i}=0,2,3,3,4$, respectively, for $i=1,\ldots,5$. There
are 8 common circuits: $C\{S,CSC^2\}\{S^2,C\{S^2,C\{S,CS\}\}\}$. Hence
$l_{\mathrm{c}i}=4,5,5,6,7,8,8,9$ and
$k_{\mathrm{c}i}=1,2,3,4,4,5,6,7$, respectively, for $i=1,\ldots,8$.
\end{proof}

\begin{figure}
\begin{center}
\includegraphics[width=8cm]{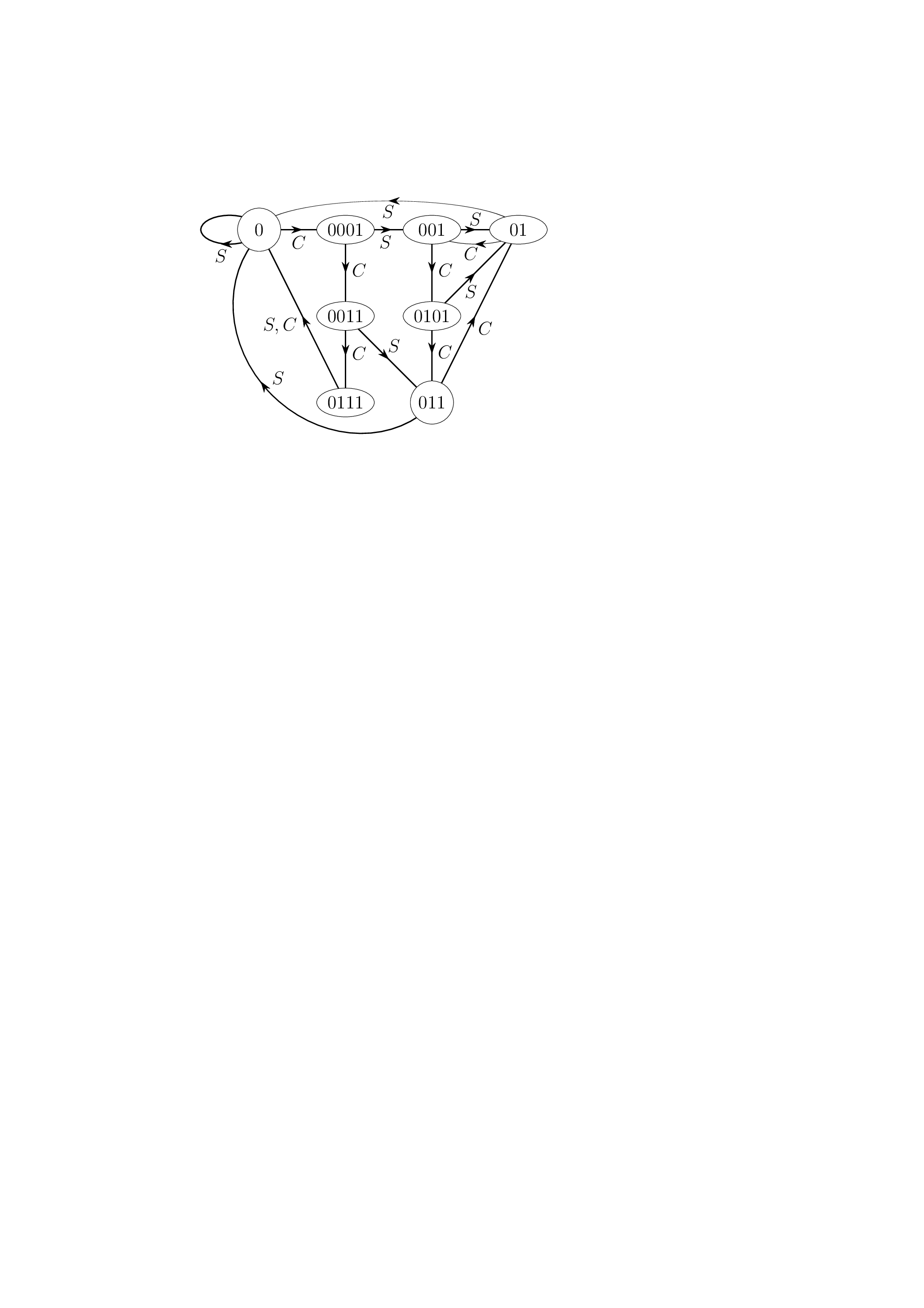}
\end{center}
\caption{Digraph for generating metatiles when tiling with squares and
  $(1,3;2)$-combs ($m=4$, $t=2$).}
\label{f:42}
\end{figure}

\begin{identity}\label{I:B42}
If $B_n$ is the sum of the $n$-th row of $\tchb{n}{k}_{4,2}$ then for
all $n$,
\[
B_n=\delta_{n,0}-\delta_{n,2}-\delta_{n,3}-\delta_{n,4}
+B_{n-1}+B_{n-2}+4B_{n-4}+2B_{n-5}-2B_{n-6}-3B_{n-7}-2B_{n-8},
\]
where $B_{n<0}=0$.
\end{identity}
\begin{proof}
Sum each term in \eqref{e:rr42} over all $k$ or use \eqref{e:PCNBn}.
\end{proof}
As defined above,
$(B_n)_{n\ge0}=1,1,1,1,5,12,21,34,70,155,318,610,\ldots$
has the generating function 
$(1-x-x^3)/((1-2x)(1-x^2)(1+2x^2+x^3+x^4))$.

\begin{identity}\label{I:A42}
If $A_n$ is the sum of the $n$-th antidiagonal of
$\tchb{n}{k}_{4,2}$ then for all $n$,
\begin{multline*} 
A_n=\delta_{n,0}-\delta_{n,3}-\delta_{n,5}-\delta_{n,8}+
A_{n-1}+A_{n-3}-A_{n-4}+2A_{n-5}+2A_{n-7}+4A_{n-8}-2A_{n-9}\\
-2A_{n-11}-A_{n-12}-A_{n-13}-A_{n-15}-A_{n-16},
\end{multline*} 
where $A_{n<0}=0$.
\end{identity}
\begin{proof}
We use the same digraph and associated parameters
as in the proof of Identity~\ref{I:rr42}
except that
$L_1=3$, $L_2=5$, $L_3=8$, $l_{\mathrm{o}i}=6,8,7,8$ for
$i=2,\ldots,5$,
and for $i=1,\ldots,8$, $l_{\mathrm{c}i}=5,7,8,10,11,13,14,16$.
\end{proof}
As defined above,
$(A_n)_{n\ge0}=1,1,1,1,1,2,4,8,16,24,36,54,81,135,225,\ldots$ (after
removing the first four 1s) is
\seqnum{A031923}. From Corollary~\ref{C:A}, $A_n$ is the number of
subsets of $\Nset_{n-4}$ chosen so that no two elements differ by
4.

\begin{figure}
\begin{center}
\includegraphics[width=13cm]{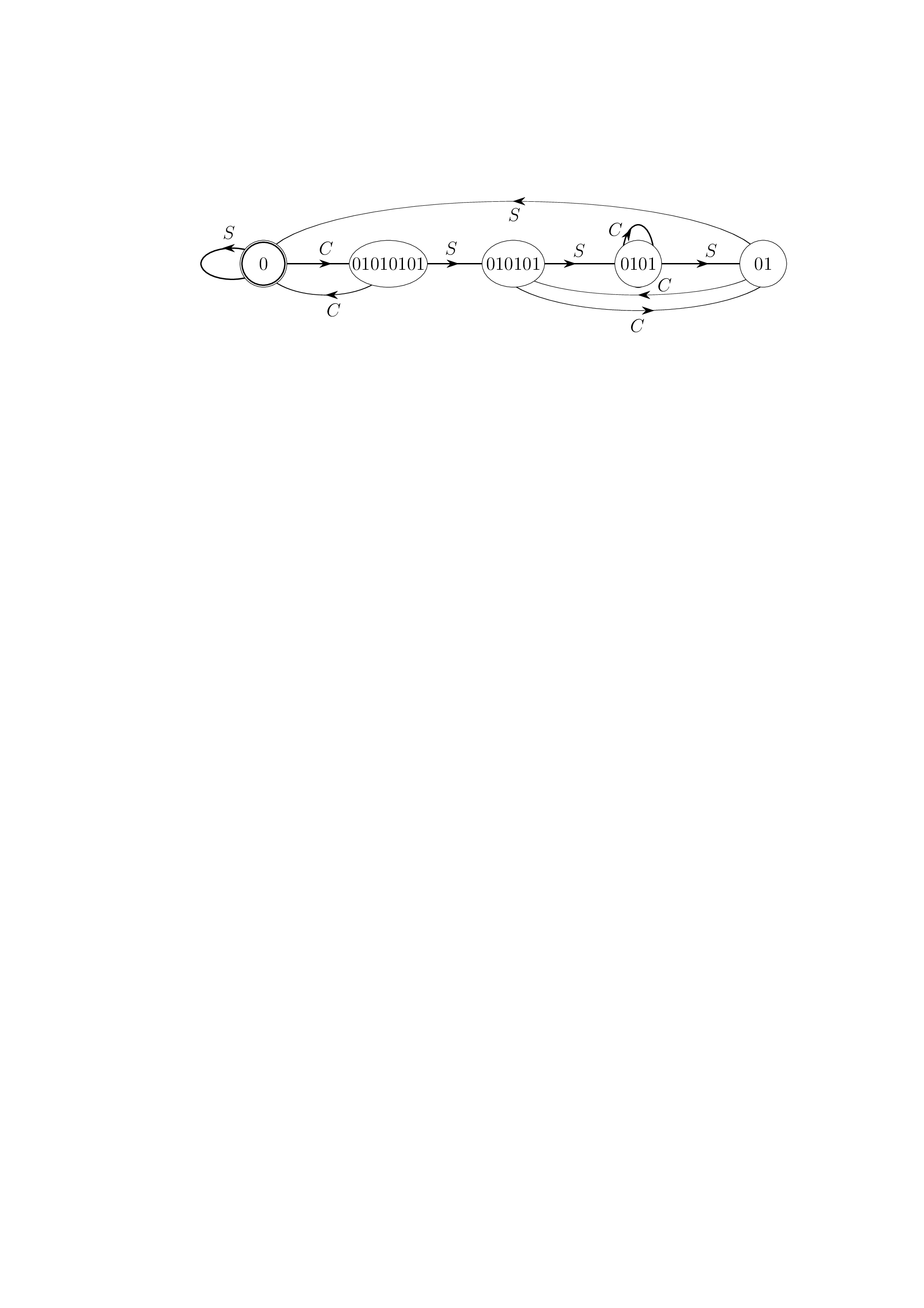}
\end{center}
\caption{Digraph for generating metatiles when tiling with squares and
  $(1,1;5)$-combs ($m=2$, $t=5$).}
\label{f:25}
\end{figure}

\begin{identity}\label{I:rr25}
For all $n,k\in\mathbb{Z}$,
\zmlg
\begin{multline}\label{e:rr25}
\chb{n}{k}{2,5}=\delta_{n,0}\delta_{k,0}-\delta_{n,1}\delta_{k,1}
-\delta_{n,2}\delta_{k,2}+\delta_{n,3}(\delta_{k,3}-\delta_{k,1})
+\chb{n-1}{k}{2,5}+\chb{n-1}{k-1}{2,5}-\chb{n-2}{k-1}{2,5}\\
+2\chb{n-2}{k-2}{2,5}
+\chb{n-3}{k-1}{2,5}-\chb{n-3}{k-2}{2,5}-2\chb{n-3}{k-3}{2,5}
-\chb{n-4}{k-1}{2,5}\\
+\chb{n-4}{k-2}{2,5}+\chb{n-4}{k-3}{2,5}-\chb{n-4}{k-4}{2,5}+\chb{n-5}{k-1}{2,5}
-2\chb{n-5}{k-3}{2,5}+\chb{n-5}{k-5}{2,5}.
\end{multline} 
\rmlg
\end{identity}
\begin{proof}
The digraph for tiling with squares and $(1,1;5)$-combs has 3 inner
cycles but no common node (Fig.~\ref{f:25}). If the loop at the 0101
node were not present, the digraph would have a common node. Using
terminology and notation we introduce in the appendix, the loop at
0101 is an errant loop and has length $L_0=1$ and number of combs
$K_0=1$. We take the 010101 node as the pseudo-common node (we could
have also chosen the 01 node instead). There are two common circuits, $CSCS$
and $CS^4$, the first of which is plain. Thus
$l_{\mathrm{c}1}=l_{\mathrm{pc}1}=4$,
$k_{\mathrm{c}1}=k_{\mathrm{pc}1}=2$, $l_{\mathrm{c}2}=5$,
$k_{\mathrm{c}2}=1$, $N_{\mathrm{c}}=2$, and $N_{\mathrm{pc}}=1$.  Of
the other two inner cycles, $C^2$ is plain, $S^2C$ is not. Thus
$L_1=2$, $K_1=2$, $L_2=3$, and $K_2=1$. The outer cycles are
$S$ and $C^2$ and are both plain. Hence $l_{\mathrm{o}1}=1$,
$k_{\mathrm{o}1}=0$, $l_{\mathrm{o}2}=k_{\mathrm{o}2}=2$, and
$N_{\mathrm{o}}=2$. The identity then follows from applying \eqref{e:PCNBnk}.
\end{proof}

\begin{identity}\label{I:B25}
If $B_n$ is the sum of the $n$-th row of $\tchb{n}{k}_{2,5}$ then for
all $n$,
\[
B_n=\delta_{n,0}-\delta_{n,1}-\delta_{n,2}+2B_{n-1}+B_{n-2}-2B_{n-3},
\]
where $B_{n<0}=0$.
\end{identity}
\begin{proof}
Sum each term in \eqref{e:rr25} over all $k$ or use \eqref{e:PCNBn}.
\end{proof}
As defined above,
$(B_n)_{n\ge0}=1,1,2,3,6,11,22,43,86,171,342,683,\ldots$ 
is \seqnum{A005578}.

\begin{identity}\label{I:A25}
If $A_n$ is the sum of the $n$-th $(1,4)$-antidiagonal of
$\tchb{n}{k}_{2,5}$ then for all $n$,
\begin{multline*} 
A_n=\delta_{n,0}-\delta_{n,5}-\delta_{n,7}-\delta_{n,10}+\delta_{n,15}+
A_{n-1}+A_{n-5}-A_{n-6}+A_{n-7}-A_{n-8}+A_{n-9}\\+2A_{n-10}
-A_{n-11}+A_{n-12}-2A_{n-15}+A_{n-16}-2A_{n-17}-A_{n-20}+A_{n-25}
\end{multline*} 
where $A_{n<0}=0$.
\end{identity}
\begin{proof}
We use the same digraph and associated parameters
as in the proof of Identity~\ref{I:rr25}
except that
$L_0=5$, $L_1=10$, $L_2=7$, $l_{\mathrm{o}2}=10$,
$l_{\mathrm{c}1}=l_{\mathrm{pc}1}=12$, and $l_{\mathrm{pc}2}=9$.
\end{proof}
As defined above,
$(A_n)_{n\ge0}=1,1,1,1,1,1,1,1,1,2,4,6,9,12,16,20,25,30,36,48,64,\ldots$ 
 is
\seqnum{A224811}. From Corollary~\ref{C:A}, $A_n$ is the number of
subsets of $\Nset_{n-8}$ chosen so that no two elements differ by
2, 4, 6, or 8.

\section{Discussion}
In this paper and AE22 we considered tiling-derived triangles whose
entries were shown to be numbers of $k$-subsets of $\Nset_n$ such that
no two elements of the subset differ by an element in a set
$\mathcal{Q}$ of disallowed differences.  In AE22, $\mathcal{Q}=\{m\}$
for fixed $m\in\Zset^+$, whereas in the present paper,
$\mathcal{Q}=\{m,2m,\ldots,(t-1)m\}$, where $t=2,3,\ldots$.  One is
then led to ask whether there is a correspondence between restricted
combinations specified by other types of $\mathcal{Q}$ and
tilings. When $\mathcal{Q}=\Nset_q$ for some $q\in\Zset^+$, using the
same ideas as in the proof of Lemma~\ref{L:ksub}, it is
straightforward to show that there is a bijection between the tilings
of an $(n+q)$-board using $k$ $(q+1)$-ominoes and squares and the
number of $k$-subsets. However, the corresponding $n$-tile tilings
triangles are just Pascal's triangle for any $q$.  In order to obtain
a tiling interpretation of restricted combinations with other classes
of $\mathcal{Q}$ one needs a form of a tiling where some parts of the
tiles are allowed to overlap with parts of other tiles. This will be
explored in depth in another paper.  Whether or not such tiling
schemes can be used to generate further aesthetically pleasing
families of number triangles remains to be seen.

From some of the entries in the OEIS that give the sums of the
$(1,t-1)$-antidiagonals of the triangle (see \seqnum{A224809},
\seqnum{A224808}, and \seqnum{A224811}) it appears that the number of
subsets of $\Nset_{n-(t-1)m}$ whose elements do not differ by an
element of the set $\{m,2m,\ldots,(t-1)m\}$ is also the number of
permutations $\pi$ of $\Nset_n$ such that $\pi(i)-i\in\{-m,0,(t-1)m\}$
for all $i\in\Nset_n$.  This is indeed true in general as we will
demonstrate combinatorially using combs and fences elsewhere.

In AE22 it was noted that the $\tchb{n}{k}_{1,2}$ and
$\tchb{n}{k}_{2,2}$ triangles are row-reversed Riordan arrays and it
was shown (in Corollary~37 of AE22) that the $\tchb{n}{k}_{m>2,2}$
triangles are not. From Theorem~35 of AE22, the
$\tchb{n}{k}_{m\ge2,t\ge3}$ triangles are not row-reversed Riordan
arrays since when tiling with $(1,m-1;t)$-combs and squares, the
filled-comb metatile contains more than one square if
$(m-1)(t-1)>1$. The same theorem tells us that, except for the $m=1$
cases, the triangles are also not Riordan arrays since there are
metatiles containing more than one comb.

There are a number of types of tiling that lead to common-node-free
digraphs that have only a few inner cycles. As far as we are aware,
Theorem~\ref{T:PCN} is the first result giving recursion relations for
a class of such cases. The theorem can be modified or generalized to
cope with a wider variety of classes and we will present these results
in future studies involving applications of tilings where instances of
such digraphs arise.

\section{Appendix: Recursion relations for 3-inner-cycle digraphs with a
  pseudo-common node} 

{\allowdisplaybreaks
For a digraph lacking a common node, we refer to an inner cycle that
can be represented as a single arc linking a node $\mathcal{E}$ to
itself as an \textit{errant loop} if the digraph would have a common
node $\mathcal{P}$ if the errant loop arc were removed. The node
$\mathcal{P}$ is then referred to as a \textit{pseudo-common
  node}. Evidently, $\mathcal{E}$ and $\mathcal{P}$ cannot be the same
node; if they were the same node, the original digraph would have a
true common node. For a digraph with an errant loop, a \textit{common
  circuit} is defined as two concatenated simple paths from the 0 node
to $\mathcal{P}$ and from $\mathcal{P}$ to the 0 node.  An outer
cycle, inner cycle, or common circuit is said to be \textit{plain} if
it does not include the errant loop node $\mathcal{E}$. See the proof
of Identity~\ref{I:rr25} for examples.

We use the $N=2$ case of the following lemma in the proof of
Theorem~\ref{T:PCN}.

\begin{lemma}\label{L:multinom}
For positive integers $j_0,j_1,\ldots,j_N$ where $N\ge2$,
\zmlg
\begin{multline*} 
\binom{j_1+\cdots+j_N}{j_1,\ldots,j_N}\binom{j_0+j_N-1}{j_0}
\!=\!
\sum_{r=1}^{N-1}\binom{j_1+\cdots+j_N-1}{j_1,\ldots,j_r-1,\ldots}
\Biggl(\!\binom{j_0+j_N-1}{j_0}-\binom{j_0+j_N-2}{j_0-1}\!\Biggr)\\
+\binom{j_1+\cdots+j_N}{j_1,\ldots,j_N}\binom{j_0+j_N-2}{j_0-1}
+\binom{j_1+\cdots+j_N-1}{j_1,\ldots,j_N-1}\binom{j_0+j_N-2}{j_0}.
\end{multline*} 
\end{lemma}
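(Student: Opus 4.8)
The plan is to prove the identity by a direct combinatorial argument, interpreting both sides as counts of the same set of labelled configurations. Fix positive integers $j_0,j_1,\dots,j_N$. Consider the set $\mathcal{C}$ of objects consisting of: (i) a word $w$ in letters $1,2,\dots,N$ in which letter $r$ occurs exactly $j_r$ times (there are $\binom{j_1+\cdots+j_N}{j_1,\ldots,j_N}$ such words); together with (ii) a binary string of length $j_0+j_N-1$ containing exactly $j_0$ zeros (there are $\binom{j_0+j_N-1}{j_0}$ such strings). Thus $|\mathcal{C}|$ is the left-hand side. Equivalently — and this is the viewpoint I would actually use — think of (ii) as a lattice path / a way of interleaving $j_0$ ``$0$-steps'' among $j_N-1$ ``$N$-steps'', i.e.\ a distribution of the $j_0$ zeros into the $j_N$ gaps determined by the $N$'s in $w$, but with the last gap (after the final $N$) forbidden. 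The $N=2$ case specializes to exactly the combinatorial bookkeeping needed in Theorem~\ref{T:PCN}: $j_0$ counts occurrences of the errant-loop contribution, $j_N$ the occurrences of the plain inner cycle $C^2$, and the $j_r$ for $r<N$ the occurrences of the non-plain inner cycle.

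Next I would set up the case split that produces the right-hand side, by conditioning on the \emph{last letter} of the word $w$ (reading $w$ left to right, or equivalently deleting one occurrence of some letter from the multiset). Case A: the last letter of $w$ is some $r$ with $1\le r\le N-1$. Removing it leaves a word counted by $\binom{j_1+\cdots+j_N-1}{j_1,\ldots,j_r-1,\ldots,j_N}$, and the binary string is unaffected. But we must subtract off the configurations that would be double-counted with Case C below; the factor $\binom{j_0+j_N-1}{j_0}-\binom{j_0+j_N-2}{j_0-1}$ is precisely the number of binary strings of length $j_0+j_N-1$ with $j_0$ zeros whose \emph{last} symbol is a $1$ (equivalently, $0$-step distributions that put nothing in the final allowed slot), which is the ``uncontaminated'' portion. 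This is the Pascal-type identity $\binom{a}{b}-\binom{a-1}{b-1}=\binom{a-1}{b}$ applied to $\binom{j_0+j_N-1}{j_0}$. Case B (second term on the right): the last letter of $w$ is $N$, \emph{and} the last symbol of the binary string is a $0$ — there are $\binom{j_0+j_N-2}{j_0-1}$ such strings (fix the trailing $0$, arrange $j_0-1$ zeros among the remaining $j_0+j_N-2$ positions), with the word still counted by the full multinomial $\binom{j_1+\cdots+j_N}{j_1,\ldots,j_N}$ (I would \emph{not} delete the final $N$ here — instead I pair the trailing $0$ with it). Case C (third term): the last letter of $w$ is $N$ and the last symbol of the binary string is a $1$; delete both, leaving $\binom{j_1+\cdots+j_N-1}{j_1,\ldots,j_N-1}\binom{j_0+j_N-2}{j_0}$.

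The bulk of the write-up is then verifying that Cases A, B, C partition $\mathcal{C}$ with no overlaps and no omissions — in particular that the correction term in Case A exactly accounts for the interaction between ``last letter $\ne N$'' and the state of the binary string, so that nothing is counted twice. I expect the main obstacle to be phrasing the bijective bookkeeping cleanly enough that the three binomial-difference factors fall out without hand-waving; the cleanest route is probably to avoid a literal bijection and instead argue by straightforward (signed) inclusion–exclusion on the two events ``last letter of $w$ is $N$'' and ``last symbol of binary string is $0$,'' then simplify each piece with the elementary Pascal relation $\binom{a-1}{b}=\binom{a}{b}-\binom{a-1}{b-1}$ and the standard ``split the last coordinate'' identity for multinomials $\binom{j_1+\cdots+j_N}{j_1,\ldots,j_N}=\sum_{r=1}^{N}\binom{j_1+\cdots+j_N-1}{j_1,\ldots,j_r-1,\ldots,j_N}$. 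An alternative, if the combinatorial argument proves unwieldy, is a brute-force algebraic proof: expand every multinomial as a ratio of factorials, clear denominators, and verify the resulting polynomial identity in $j_0,\dots,j_N$; this is routine but uninformative, so I would present it only as a fallback.
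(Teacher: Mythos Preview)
Your combinatorial partition as written does not hold up. In Case~B you condition on the last letter of $w$ being $N$ yet count the words by the full multinomial $\binom{j_1+\cdots+j_N}{j_1,\ldots,j_N}$; those two things are incompatible, and ``not deleting the final $N$'' does not rescue it. In Case~A there is no double-counting with Case~C to subtract, since ``last letter $\ne N$'' and ``last letter $=N$'' are already disjoint events. The partition that actually matches the right-hand side is simpler than the one you describe: split first on the last symbol of the binary string. If it is $0$, the word is unconstrained and you get the second term. If it is $1$, delete it (leaving $\binom{j_0+j_N-2}{j_0}$ strings, which equals your difference by Pascal) and then split on the last letter of $w$: last letter $=N$ gives the third term, last letter $=r\in\{1,\ldots,N-1\}$ gives the summand in the first term.

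That said, the algebraic route you name at the end --- apply the multinomial-split identity to peel off the $r=N$ summand, apply Pascal's recurrence to the binomial factor on that piece, then feed the multinomial-split identity back in reverse on one of the resulting terms --- is precisely the paper's proof, carried out in three displayed lines. Since you have already identified both ingredients, lead with that argument; the combinatorial packaging adds length without adding content here.
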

\begin{proof}
Using the result for multinomial coefficients that
\[
\binom{j_1+\cdots+j_N}{j_1,\ldots,j_N}
=\sum_{r=1}^{N}\binom{j_1+\cdots+j_N-1}{j_1,\ldots,j_r-1,\ldots,j_N},
\]
we have
\begin{align*} 
&\binom{j_1+\cdots+j_N}{j_1,\ldots,j_N}\binom{j_0+j_N-1}{j_0}\\
&\qquad\qquad=\Biggl(\sum_{r=1}^{N-1}\binom{j_1+\cdots+j_N-1}{j_1,\ldots,j_r-1,\ldots} 
+\binom{j_1+\cdots+j_N-1}{j_1,\ldots,j_N-1}\Biggr)\binom{j_0+j_N-1}{j_0}\\
&\qquad\qquad=\sum_{r=1}^{N-1}\binom{j_1+\cdots+j_N-1}{j_1,\ldots,j_r-1,\ldots}\binom{j_0+j_N-1}{j_0}\\
&\qquad\qquad\qquad+\binom{j_1+\cdots+j_N-1}{j_1,\ldots,j_N-1}
\Biggl(\binom{j_0+j_N-2}{j_0}+\binom{j_0+j_N-1}{j_0-1}\Biggr)\\
&\qquad\qquad=\sum_{r=1}^{N-1}\binom{j_1+\cdots+j_N-1}{j_1,\ldots,j_r-1,\ldots}\binom{j_0+j_N-1}{j_0}+\binom{j_1+\cdots+j_N-1}{j_1,\ldots,j_N-1}\binom{j_0+j_N-2}{j_0}\\
&\qquad\qquad\qquad+\Biggl( 
\binom{j_1+\cdots+j_N}{j_1,\ldots,j_N}
-\sum_{r=1}^{N-1}\binom{j_1+\cdots+j_N-1}{j_1,\ldots,j_r-1,\ldots,j_N}
\Biggr)\binom{j_0+j_N-2}{j_0-1},
\end{align*} 
which gives the required result on rearranging.
\end{proof}

\begin{theorem}\label{T:PCN}
For a digraph with an errant loop of length $L_0$ containing $K_0$ 
combs, a plain inner cycle of length $L_1$ containing $K_1$ combs,
a non-plain inner cycle of length $L_2$ containing $K_2$ combs, and
outer cycles that are all plain and have length $l_{\mathrm{o}i}$ and
contain $k_{\mathrm{o}i}$ combs for $i=1,\ldots,N\rb{o}$, let
$l_{\mathrm{c}i}$ be the length of the $i$-th common circuit
and let $k_{\mathrm{c}i}$ be the number of combs it contains
($i=1,\ldots,N\rb{c}$), and let $l_{\mathrm{pc}i}$ be the length of
the $i$-th plain common circuit and let $k_{\mathrm{pc}i}$ be the
number of combs it contains ($i=1,\ldots,N\rb{pc}$). Then for all
integers $n$ and $k$,
\zmlg
\begin{align}\label{e:PCNBn}
B_n&=\delta_{n,0}
+\sum_{r=0}^2 (B_{n-L_r}-\delta_{n,L_r})
+\delta_{n,L_0+L_1}-B_{n-L_0-L_1}\nonumber\\
&\qquad+\sum_{i=1}^{N\rb{o}}
\biggl(B_{n-l_{\mathrm{o}i}}
+B_{n-l_{\mathrm{o}i}-L_0-L_1}
-\sum_{r=0}^NB_{n-l_{\mathrm{o}i}-L_r}\biggr)
+\sum_{i=1}^{N\rb{c}} B_{n-l_{\mathrm{c}i}}
-\sum_{i=1}^{N\rb{pc}} B_{n-l_{\mathrm{pc}i}-L_0},\\
B_{n,k}&=\delta_{n,0}\delta_{k,0}
+\sum_{r=0}^2 (B_{n-L_r,k-K_r}-\delta_{n,L_r}\delta_{k,K_r})
+ 
\delta_{n,L_0+L_1}\delta_{k,K_0+K_1}-B_{n-L_0-L_1,k-K_0-K_1}\nonumber\\
&\qquad+\sum_{i=1}^{N\rb{o}}
\biggl(B_{n-l_{\mathrm{o}i},k-k_{\mathrm{o}i}}
+B_{n-l_{\mathrm{o}i}-L_0-L_1,k-k_{\mathrm{o}i}-K_0-K_1}
-\sum_{r=0}^NB_{n-l_{\mathrm{o}i}-L_r,k-k_{\mathrm{o}i}-K_r}\biggr)\nonumber\\
&\qquad+\sum_{i=1}^{N\rb{c}} B_{n-l_{\mathrm{c}i},k-k_{\mathrm{c}i}}
-\sum_{i=1}^{N\rb{pc}} B_{n-l_{\mathrm{pc}i}-L_0,k-k_{\mathrm{pc}i}-K_0},
\label{e:PCNBnk}
\end{align} 
where $B_{n<0}=B_{n,k<0}=B_{n<k,k}=0$. If the lengths of the cycles
and circuits are calculated as the number of tiles (the total contribution
made to the number of cells occupied) then
$B_n$ is the number of $n$-tile tilings (the number of tilings of an
$n$-board) and $B_{n,k}$ is the number of such tilings that use $k$ combs.

\end{theorem}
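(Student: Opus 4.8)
The plan is to identify the generating function of $B_n$ (and of $B_{n,k}$) by a metatile decomposition of the digraph and then read off the recursion by extracting coefficients. Put $\mathcal{O}(x)=\sum_{i=1}^{N\rb{o}}x^{l_{\mathrm{o}i}}$, $\mathcal{C}(x)=\sum_{i=1}^{N\rb{c}}x^{l_{\mathrm{c}i}}$, $\mathcal{C}_{\mathrm p}(x)=\sum_{i=1}^{N\rb{pc}}x^{l_{\mathrm{pc}i}}$, and $G(x)=(1-x^{L_0})(1-x^{L_1})-x^{L_2}=1-x^{L_0}-x^{L_1}+x^{L_0+L_1}-x^{L_2}$. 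Since $B_n$ counts closed walks at the $0$ node, $B(x):=\sum_nB_nx^n=1/(1-M(x))$, where $M(x)$ is the generating function (by length) of metatiles, i.e.\ of closed walks from $0$ to $0$ not revisiting $0$. The whole statement then reduces to proving
\[
M(x)=\mathcal{O}(x)+\frac{\mathcal{C}(x)-x^{L_0}\mathcal{C}_{\mathrm p}(x)}{G(x)},
\]
because this gives $B(x)=G/\!\bigl(G(1-\mathcal{O})-\mathcal{C}+x^{L_0}\mathcal{C}_{\mathrm p}\bigr)$, equivalently $GB=G+G\mathcal{O}B+\mathcal{C}B-x^{L_0}\mathcal{C}_{\mathrm p}B$, and taking $[x^n]$ of both sides (using the explicit $G=1-x^{L_0}-x^{L_1}+x^{L_0+L_1}-x^{L_2}$) yields exactly \eqref{e:PCNBn} with $N=2$ after a routine rearrangement.

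To count metatiles I would first delete the errant-loop arc; by hypothesis the resulting digraph has $\mathcal{P}$ as a common node, so every cycle in it meets $0$ or $\mathcal{P}$, and the node $\mathcal{E}$ is reachable only along the non-plain inner cycle $L_2$ and the non-plain common circuits. Consequently a metatile avoiding $\mathcal{P}$ also avoids $\mathcal{E}$ and is exactly an outer cycle, which accounts for the $\mathcal{O}(x)$ term. A metatile visiting $\mathcal{P}$ is cut at its successive visits to $\mathcal{P}$ into a common circuit $c$ (a simple path $0\to\mathcal{P}$ concatenated with a simple path $\mathcal{P}\to0$) into whose single $\mathcal{P}$-visit an ordered sequence of inner cycles, each equal to $L_1$ or $L_2$, has been inserted, and with an arbitrary number $\ge 0$ of errant loops inserted at each visit to $\mathcal{E}$. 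The delicate structural point is that each $\mathcal{E}$-visit is supplied by exactly one copy of $L_2$ (a simple cycle, hence crossing $\mathcal{E}$ once), plus, when $c$ is non-plain, exactly one further $\mathcal{E}$-slot on $c$ itself; I would establish this in the generality in which the theorem is applied (it holds for the $m=2$, $t=5$ digraph of Identity~\ref{I:rr25}).

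Translating this decomposition into generating functions is then mechanical: a run of errant loops at a fixed slot contributes $1/(1-x^{L_0})$, so a decorated $L_2$-cycle contributes $x^{L_2}/(1-x^{L_0})$, an ordered sequence of decorated inner cycles contributes $1/\!\bigl(1-x^{L_1}-x^{L_2}/(1-x^{L_0})\bigr)=(1-x^{L_0})/G(x)$, a plain common circuit carrying such a sequence contributes $x^{l_{\mathrm{pc}i}}(1-x^{L_0})/G$, and a non-plain common circuit, which carries its own extra errant-loop slot, contributes $x^{l_{\mathrm{c}i}}\cdot\frac{1}{1-x^{L_0}}\cdot\frac{1-x^{L_0}}{G}=x^{l_{\mathrm{c}i}}/G$; summing gives the asserted $M(x)$. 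For \eqref{e:PCNBnk}, every arc contributes a fixed number of cells and a fixed number of combs, so one reweights $x^{\ell}\mapsto x^{\ell}y^{\#\text{combs}}$ throughout (errant loop $\mapsto x^{L_0}y^{K_0}$, and so on) and repeats the computation with $G(x,y)=(1-x^{L_0}y^{K_0})(1-x^{L_1}y^{K_1})-x^{L_2}y^{K_2}$. The closing sentence of the theorem, distinguishing lengths measured in tiles from lengths measured in cells, is absorbed into this framework, since it only changes the weight assigned to a square or comb arc from $1$ to $t$.

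I expect the main obstacle to be precisely the bijective/structural step above: justifying the unique decomposition of a $\mathcal{P}$-visiting metatile and, in particular, verifying that the errant loop can be pumped independently at exactly one slot per copy of $L_2$ and one per non-plain common circuit. If one prefers to avoid generating functions, the same content can be carried out directly at the level of the recursion: counting the $\binom{j_1+j_2}{j_1}$ orderings of $j_1$ copies of $L_1$ and $j_2$ copies of $L_2$ together with the $\binom{j_0+j_2-1}{j_0}$ distributions of $j_0$ errant loops among the resulting $j_2$ slots, one checks that the number of such configurations of a given total length satisfies a finite recursion, which is exactly the $N=2$ case of Lemma~\ref{L:multinom} once Pascal's rule is used to rewrite $\binom{j_0+j_2-1}{j_0}-\binom{j_0+j_2-2}{j_0-1}$ as $\binom{j_0+j_2-2}{j_0}$; substituting this into the last-metatile recurrence for $B_n$ telescopes the infinitely many metatile families into \eqref{e:PCNBn} and \eqref{e:PCNBnk}.
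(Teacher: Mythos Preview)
Your proposal is correct and takes a genuinely different route from the paper. The paper works directly with the last-metatile recurrence: it writes $B_n$ as an infinite sum over metatiles indexed by $(j_0,j_1,j_2)$ with weights $\binom{j_1+j_2}{j_1}\binom{j_0+j_2-1}{j_0}$ (exactly the orderings and errant-loop distributions you describe in your final paragraph), then forms the finite combination $E(n)-E(n-L_0)-E(n-L_1)+E(n-L_0-L_1)-E(n-L_2)$, re-indexes, and invokes Lemma~\ref{L:multinom} (with $N=2$) to force the telescoping by hand. Your generating-function argument packages all of this: the factor $G(x)=(1-x^{L_0})(1-x^{L_1})-x^{L_2}$ is precisely the polynomial whose shift pattern the paper applies, and your identity $M(x)=\mathcal{O}(x)+(\mathcal{C}(x)-x^{L_0}\mathcal{C}_{\mathrm p}(x))/G(x)$ encodes the cancellations that the paper verifies term by term via the binomial lemma. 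What you gain is brevity and a clear explanation of \emph{why} the particular alternating combination works (it clears the denominator $G$); what the paper's approach gains is that it never needs to argue that the infinite metatile sum is a formal power series identity, and it makes the role of Lemma~\ref{L:multinom} explicit rather than hiding it inside the geometric-series expansion of $1/G$.

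The structural step you flag as delicate---that each copy of the non-plain inner cycle and each non-plain common circuit contributes exactly one $\mathcal{E}$-slot---is the same assumption the paper builds into its conditioning formula (the $\binom{j_0+j_2-1}{j_0}$ factor and the single extra sum over $e\ge0$), so you are not assuming anything the paper does not. Your hedge that this ``holds for the $m=2$, $t=5$ digraph'' is appropriate: the theorem as stated does not spell out that inner cycles and common circuits are \emph{simple} and hence visit $\mathcal{E}$ at most once, but the paper's proof uses this implicitly in exactly the same way.
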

\begin{proof}
To keep the algebra looking as simple as possible while retaining the
essentials at the heart of the proof, we just prove the formula for
$B_n$ when there is a single outer cycle, one plain common circuit,
and one non-plain common circuit. Their respective lengths
are $l\rb{o}$, $l\rb{pc}$, and $l\rb{npc}$. It is straightforward to
modify the proof we give here to include the sums over outer cycles
and common circuits. The proof of \eqref{e:PCNBnk} is entirely analogous.

Conditioning on the final metatile gives
\zmlg
\begin{multline}\label{e:condPCN} 
B_n=\delta_{n,0}+B_{n-l\rb{o}}
+\sum_{j_1\ge0}B_{n-l\rb{pc}-j_1L_1}
+\sum_{\substack{j_0,j_1\ge0,\\j_2\ge1}}\!\!\!
\binom{j_1+j_2}{j_1}\binom{j_0+j_2-1}{j_0}B_{n-l\rb{pc}-j_0L_0-j_1L_1-j_2L_2}\\
+\sum_{e,j_1\ge0}B_{n-l\rb{npc}-eL_0-j_1L_1}
+\sum_{\substack{e,j_0,j_1\ge0,\\j_2\ge1}}\!\!\!
\binom{j_1+j_2}{j_1}\binom{j_0+j_2-1}{j_0}B_{n-l\rb{npc}-(j_0+e)L_0-j_1L_1-j_2L_2}
\end{multline} 
with $B_{n<0}=0$. We now explain the origin of the four sums in
\eqref{e:condPCN} while referring to the digraph in Fig.~\ref{f:25}
(taking the 010101 node as $\mathcal{P}$) for examples of
metatiles. The first sum is from metatiles obtained by taking the
first part of the plain common circuit to $\mathcal{P}$, then
following the plain inner cycle $j_1$ times, and then returning to the
0 node via the second half of the plain common circuit (e.g.,
$CSC^{2j_1}CS$ is the symbolic representation of the metatiles
corresponding to the terms in the sum). 

The second sum corresponds to metatiles with the same start and end as
with the first sum but on reaching $\mathcal{P}$ the plain and
non-plain inner cycles are executed $j_1$ and $j_2$ times,
respectively, in any order but the non-plain inner cycle is executed
at least once. The number of ways of choosing the order is
$\tbinom{j_1+j_2}{j_1}$. The errant loop is also traversed a total of
$j_0$ times. Each time the path reaches $\mathcal{E}$ (during an
execution of the non-plain inner cycle), it can detour and traverse
the errant loop any number of times. This is the origin of the
$\tbinom{j_0+j_2-1}{j_0}$ factor which has $j_2-1$ rather than $j_2$
as the non-plain inner cycle must be started before the errant loop can
be traversed. E.g., the metatiles corresponding to the terms in the
sum when $j_0=j_1=j_2=1$ are $CS\{C^2SCS,SCSC^2\}CS$.

The third and fourth sums are analogous to the first and second but
$\mathcal{P}$ is reached via the first half of the non-plain common
circuit, and after the inner cycles have been traversed $j_r$ times
(with $r=1$ in the third sum and $r=0,1,2$ in the fourth), the 0 node
is returned to via the second half of the non-plain common circuit
but the errant loop is executed an extra $e$ times when the path reaches
$\mathcal{E}$. E.g., the metatiles corresponding to the terms in the
third sum are $CSC^{2j_1}SC^eS^2$, and in the fourth sum when
$j_0=j_1=j_2=1$ they are $CS\{C^2SCS,SCSC^2\}SC^eS^2$. 

Representing \eqref{e:condPCN} by $E(n)$, we write down
\[
E(n)-E(n-L_0)-E(n-L_1)+E(n-L_0-L_1)-E(n-L_2)
\]
and re-index the sums so that, where possible, the $B_{n-\alpha}$
inside the sums for any $\alpha$ appear the same as for $E(n)$ (e.g.,
$\sum_{j_1\ge0}B_{n-L_1-l\rb{pc}-j_1L_1}=\sum_{j_1\ge1}B_{n-l\rb{pc}-j_1L_1}$).
This leaves
\begin{multline}\label{e:subPCN} 
B_n-\sum_{r=0}^2B_{n-L_r}+B_{n-L_0-L_1}=\delta_{n,0}+B_{n-l\rb{o}}
-\sum_{r=0}^2(\delta_{n,L_r}+B_{n-l\rb{o}-L_r})+\delta_{n,L_0+L_1}
+B_{n-l\rb{o}-L_0-L_1}\\
+\sum_{j_1\ge0}\beta_{j_1L_1}-\sum_{j_1\ge1}\beta_{j_1L_1}
-\sum_{j_1\ge0}\beta_{L_0+j_1L_1}+\sum_{j_1\ge1}\beta_{L_0+j_1L_1}
-\sum_{j_1\ge0}\beta_{j_1L_1+L_2}\\
+\sum_{e,j_1\ge0}\hat{\beta}_{j_1L_1}
-\sum_{\substack{e\ge0,\\j_1\ge1}}\hat{\beta}_{j_1L_1}
-\sum_{\substack{e\ge1,\\j_1\ge0}}\hat{\beta}_{j_1L_1}
+\sum_{e,j_1\ge1}\hat{\beta}_{j_1L_1}
-\sum_{e,j_1\ge0}\hat{\beta}_{j_1L_1+L_2}\\
+\sum_{\substack{j_0,j_1\ge0,\\j_2\ge1}}
\binom{j_1+j_2}{j_1}\binom{j_0+j_2-1}{j_0}\beta_\lambda
-\sum_{\substack{j_0\ge0,\\j_1,j_2\ge1}}
\binom{j_1+j_2-1}{j_1-1}\binom{j_0+j_2-1}{j_0}\beta_\lambda\\
-\sum_{\substack{j_0,j_2\ge1,\\j_1\ge0}}
\binom{j_1+j_2}{j_1}\binom{j_0+j_2-2}{j_0-1}\beta_\lambda
+\sum_{j_0,j_1,j_2\ge1}
\binom{j_1+j_2-1}{j_1-1}\binom{j_0+j_2-2}{j_0-1}\beta_\lambda\\
-\sum_{\substack{j_0,j_1\ge0,\\j_2\ge2}}
\binom{j_1+j_2-1}{j_1}\binom{j_0+j_2-2}{j_0}\beta_\lambda\\
+\text{the above 3 lines with $\beta_\lambda$ replaced by
  $\hat{\beta}_\lambda$ and also summed over all $e\ge0$},
\end{multline} 
where $\beta_a=B_{n-l\rb{pc}-a}$,
$\hat{\beta}_a=B_{n-l\rb{npc}-eL_0-a}$, and
$\lambda=j_0L_0+j_1L_1+j_2L_2$. On rearranging \eqref{e:subPCN} it is
immediately apparent where all but the last two sums in \eqref{e:PCNBn}
come from.  The first two sums in the second line of \eqref{e:subPCN}
reduce to $\beta_0=B_{n-l\rb{pc}}$. The next two sums reduce to
$-\beta_{L_0}=-B_{n-l\rb{pc}-L_0}$ which accounts for the final sum in
\eqref{e:PCNBn}. The first four sums in the third
line of \eqref{e:subPCN} reduce to $B_{n-l\rb{npc}}$ which when added
to the $B_{n-l\rb{pc}}$ accounts for the penultimate sum in \eqref{e:PCNBn}.

We now complete the proof by showing that the remaining terms in
\eqref{e:subPCN} cancel out.  We regroup terms in each of the sums in
the fourth, fifth, and sixth lines in \eqref{e:subPCN} to give
\begin{subequations}
\label{e:456}
\begin{align} 
&\sum_{\substack{j_0,j_1\ge0,\\j_2\ge1}}\!\!\!
\binom{j_1+j_2}{j_1}\binom{j_0+j_2-1}{j_0}\beta_\lambda=\!\!
\sum_{\substack{j_0,j_1\ge1,\\j_2\ge2}}\!\!\!
\binom{j_1+j_2}{j_1}\binom{j_0+j_2-1}{j_0}\beta_\lambda\nonumber\\
&\qquad+\sum_{j_0,j_1\ge1}\!\!\!
\binom{j_1+1}{j_1}\beta_{j_0L_0+j_1L_1+L_2}
+\sum_{\substack{j_0\ge1,\\j_2\ge2}}\!
\binom{j_0+j_2-1}{j_0}\beta_{j_0L_0+j_2L_2}
+\sum_{\substack{j_1\ge1,\\j_2\ge2}}\!
\binom{j_1+j_2}{j_1}\beta_{j_1L_1+j_2L_2}\nonumber\\
&\qquad+\sum_{j_0\ge1}\beta_{j_0L_0+L_2}
+\sum_{j_1\ge1}\binom{j_1+1}{j_1}\beta_{j_1L_1+L_2}
+\sum_{j_2\ge2}\beta_{j_2L_2}
+\beta_{L_2},
\label{e:a} \\
&\sum_{\substack{j_0\ge0,\\j_1,j_2\ge1}}\!\!\!
\binom{j_1+j_2-1}{j_1-1}\binom{j_0+j_2-1}{j_0}\beta_\lambda=\!\!
\sum_{\substack{j_0,j_1\ge1,\\j_2\ge2}}\!\!\!
\binom{j_1+j_2-1}{j_1-1}\binom{j_0+j_2-1}{j_0}\beta_\lambda
\nonumber\\
&\qquad+\sum_{j_0,j_1\ge1}\!\!\!
\binom{j_1}{j_1-1}\beta_{j_0L_0+j_1L_1+L_2}
+\sum_{\substack{j_1\ge1,\\j_2\ge2}}\!
\binom{j_1+j_2-1}{j_1-1}\beta_{j_1L_1+j_2L_2}
+\sum_{j_1\ge1}\binom{j_1}{j_1-1}\beta_{j_1L_1+L_2},
\label{e:b} \\
&\sum_{\substack{j_0,j_2\ge1,\\j_1\ge0}}\!\!\!
\binom{j_1+j_2}{j_1}\binom{j_0+j_2-2}{j_0-1}\beta_\lambda=\!\!
\sum_{\substack{j_0,j_1\ge1,\\j_2\ge2}}\!\!\!
\binom{j_1+j_2}{j_1}\binom{j_0+j_2-2}{j_0-1}\beta_\lambda
\nonumber\\
&\qquad+\sum_{j_0,j_1\ge1}\!\!\!
\binom{j_1+1}{j_1}\beta_{j_0L_0+j_1L_1+L_2}
+\sum_{\substack{j_0\ge1,\\j_2\ge2}}\!
\binom{j_0+j_2-1}{j_0-1}\beta_{j_0L_0+j_2L_2}
+\sum_{j_0\ge1}\beta_{j_0L_0+L_2},
\label{e:c} \\
&\sum_{j_0,j_1,j_2\ge1}\!\!\!
\binom{j_1+j_2-1}{j_1-1}\binom{j_0+j_2-2}{j_0-1}\beta_\lambda=\!\!
\sum_{\substack{j_0,j_1\ge1,\\j_2\ge2}}\!\!\!
\binom{j_1+j_2-1}{j_1-1}\binom{j_0+j_2-2}{j_0-1}\beta_\lambda\nonumber\\
&\qquad+\sum_{j_0,j_1\ge1}\!\!\!
\binom{j_1}{j_1-1}\beta_{j_0L_0+j_1L_1+L_2},
\label{e:d} \\
&\sum_{\substack{j_0,j_1\ge0,\\j_2\ge2}}\!\!\!
\binom{j_1+j_2-1}{j_1}\binom{j_0+j_2-2}{j_0}\beta_\lambda=\!\!
\sum_{\substack{j_0,j_1\ge1,\\j_2\ge2}}\!\!\!
\binom{j_1+j_2-1}{j_1}\binom{j_0+j_2-2}{j_0}\beta_\lambda\nonumber\\
&\qquad+\sum_{\substack{j_0\ge1,\\j_2\ge2}}\!
\binom{j_0+j_2-1}{j_0}\beta_{j_0L_0+j_2L_2}
+\sum_{\substack{j_1\ge1,\\j_2\ge2}}\!
\binom{j_1+j_2-1}{j_1}\beta_{j_1L_1+j_2L_2}
+\sum_{j_2\ge2}\beta_{j_2L_2}.
\label{e:e} 
\end{align} 
\end{subequations}
We denote the $p$-th sum (or term) on the right-hand side of
(\ref{e:456}$x$) by $x_p$ where $x$ is a--e. Then 
$\mathrm{a}_1-\mathrm{b}_1-\mathrm{c}_1+\mathrm{d}_1-\mathrm{e}_1=0$ by
virtue of Lemma~\ref{L:multinom}, 
$\mathrm{a}_2$ cancels $\mathrm{c}_2$, 
$\mathrm{a}_3$ cancels $\mathrm{c}_3+\mathrm{e}_2$,  
$\mathrm{a}_4$ cancels $\mathrm{b}_3+\mathrm{e}_3$, 
$\mathrm{a}_5$ cancels $\mathrm{c}_4$,
$\mathrm{a}_6-\mathrm{b}_4+\mathrm{a}_8=\sum_{j_1\ge0}\beta_{j_1L_1+L_2}$
and therefore cancels the last sum in the second line of \eqref{e:subPCN},
$\mathrm{a}_7$ cancels $\mathrm{e}_4$, and
$\mathrm{b}_2$ cancels $\mathrm{d}_2$. The simplification works in the
same way for the terms represented by the last line of
\eqref{e:subPCN}. Denoting sums or terms in 
the corresponding set of equations by $\hat{x}_p$, 
$\hat{\mathrm{a}}_6-\hat{\mathrm{b}}_4+\hat{\mathrm{a}}_8
=\sum_{e,j_1\ge0}\hat{\beta}_{j_1L_1+L_2}$
and therefore cancels the last sum in the third line of \eqref{e:subPCN}.

The proof of \eqref{e:PCNBnk} proceeds in an analogous way. Again
considering the case where there is a single outer cycle (with
$k\rb{o}$ combs), a plain common circuit (with $k\rb{pc}$ combs), and
a non-plain common circuit (with $k\rb{npc}$ combs),
conditioning on
the final metatile gives
\begin{multline}\label{e:condPCNnk} 
B_{n,k}=\delta_{n,0}\delta_{k,0}+B_{n-l\rb{o},k-k\rb{o}}
+\sum_{j_1\ge0}B_{n-l\rb{pc}-j_1L_1,n-k\rb{pc}-j_1K_1}\\
+\sum_{\substack{j_0,j_1\ge0,\\j_2\ge1}}\!\!\!
\binom{j_1+j_2}{j_1}\binom{j_0+j_2-1}{j_0}
B_{n-l\rb{pc}-\lambda,k-k\rb{pc}-\kappa}
+\sum_{e,j_1\ge0}B_{n-l\rb{npc}-eL_0-j_1L_1,k-k\rb{npc}-eK_0-j_1K_1}\\
+\sum_{\substack{e,j_0,j_1\ge0,\\j_2\ge1}}\!\!\!
\binom{j_1+j_2}{j_1}\binom{j_0+j_2-1}{j_0}B_{n-l\rb{npc}-eL_0-\lambda,k-k\rb{npc}-eK_0-\kappa}
\end{multline} 
with $B_{n,k>n}=B_{n,k<0}=0$ and where $\kappa=j_0K_0+j_1K_1+j_2K_2$.
Denoting \eqref{e:condPCNnk} by $E(n,k)$, writing down 
\[
E(n,k)-E(n-L_0,k-K_0)-E(n-L_1,k-K_1)+E(n-L_0-L_1,k-K_0-K_1)-E(n-L_2,k-K_2),
\]
and then proceeding in the same way as for the proof of
\eqref{e:PCNBn} gives the required result. 
\end{proof}
}

\bigskip
\hrule
\bigskip

\noindent 2010 {\it Mathematics Subject Classification}:
Primary 11B39; Secondary 05A19, 05A15.

\noindent \emph{Keywords}:
combinatorial proof, combinatorial identity, $n$-tiling,
Pascal-like triangle, directed pseudograph, Fibonacci polynomial,
restricted combination

\bigskip
\hrule
\bigskip

\sloppy
\noindent (Concerned with sequences 
\seqnum{A000045},
\seqnum{A000930},
\seqnum{A003269},
\seqnum{A003520},
\seqnum{A005578},
\seqnum{A005708},
\seqnum{A005709},
\seqnum{A005710},
\seqnum{A007318}, 
\seqnum{A011782},
\seqnum{A031923},
\seqnum{A099163},
\seqnum{A224808},
\seqnum{A224809},
\seqnum{A224811},
\seqnum{A350110}, 
\seqnum{A350111}, 
\seqnum{A350112},
\seqnum{A354665},
\seqnum{A354666},
\seqnum{A354667}, and 
\seqnum{A354668}) 

\bigskip
\hrule
\bigskip


\begin{thebibliography}{10}
\bibitem{AE-GenFibSqr}
M.~A. Allen and K.~Edwards, Connections between two classes of generalized
  {Fibonacci} numbers squared and permanents of (0,1) {Toeplitz} matrices,
  \emph{Lin. Multilin. Algebra}  (2022), \url{https://doi.org/10.1080/03081087.2022.2107979},
\url{https://arxiv.org/abs/2107.02589}.

\bibitem{AE22}
M.~A. Allen and K.~Edwards, On two families of generalizations of {Pascal}'s
  triangle, \emph{J. Integer Sequences} \textbf{25} (2022), 
\href{https://cs.uwaterloo.ca/journals/JIS/VOL25/Allen/allen3.html}{Article
22.7.1}.

\bibitem{Bar=16}
P.~Barry, \emph{Riordan Arrays: A Primer}, Logic Press, Kilcock, 2016.

\bibitem{Edw08}
K.~Edwards, A {Pascal}-like triangle related to the tribonacci numbers,
  \emph{Fibonacci Quart.} \textbf{46/47} (2008/2009), \href{https://www.fq.math.ca/Papers1/46_47-1/Edwards11-08.pdf}{18--25}.

\bibitem{EA15}
K.~Edwards and M.~A. Allen, Strongly restricted permutations and tiling with
  fences, \emph{Discrete Appl. Math.} \textbf{187} (2015), \href{https://doi.org/10.1016/j.dam.2015.02.004}{82--90}.

\bibitem{EA19}
K.~Edwards and M.~A. Allen, A new combinatorial interpretation of the
  {Fibonacci} numbers squared, \emph{Fibonacci Quart.} \textbf{57} (5) (2019),
  \href{https://www.fq.math.ca/Papers1/57-5/edwards-allen.pdf}{48--53}.

\bibitem{EA20a}
K.~Edwards and M.~A. Allen, A new combinatorial interpretation of the
  {Fibonacci} numbers cubed, \emph{Fibonacci Quart.} \textbf{58} (5) (2020),
  \href{https://www.fq.math.ca/Papers1/58-5/edwards.pdf}{128--134}.

\bibitem{EA20}
K.~Edwards and M.~A. Allen, A new combinatorial interpretation of the
  {Fibonacci} numbers squared. {Part II}, \emph{Fibonacci Quart.} \textbf{58}
  (2020), \href{https://www.fq.math.ca/Papers/58-2/allen12032019rev.pdf}{169--177}.

\bibitem{EA21}
K.~Edwards and M.~A. Allen, New combinatorial interpretations of the
  {Fibonacci} numbers squared, golden rectangle numbers, and {Jacobsthal}
  numbers using two types of tile, \emph{J. Integer Sequences} \textbf{24} (2021),
  \href{https://cs.uwaterloo.ca/journals/JIS/VOL24/Allen/edwards2.html}{Article 21.3.8}.

\bibitem{KL91c}
J.~Konvalina and Y.-H. Liu, Subsets without $q$-separation and binomial
  products of {Fibonacci} numbers, \emph{J. Combin. Theor. A} \textbf{57}
  (1991), \href{https://doi.org/10.1016/0097-3165(91)90054-K}{306--310}.

\bibitem{MS08}
T.~Mansour and Y.~Sun, On the number of combinations without certain
  separations, \emph{Eur. J. Combinator.} \textbf{29} (2008), \href{https://doi.org/10.1016/j.ejc.2007.06.024}{1200--1206}.

\bibitem{SGWW91}
L.~W. Shapiro, S.~Getu, W.-J. Woan, and L.~C. Woodson, The {Riordan} group,
  \emph{Discrete Appl. Math.} \textbf{34} (1991), \href{https://doi.org/10.1016/0166-218X(91)90088-E}{229--239}.

\bibitem{Slo-OEIS}
N.~J.~A. Sloane, \emph{The Online Encyclopedia of Integer Sequences}, published
  electronically, 2010. Available at \url{https://oeis.org}.
\end{thebibliography}
\end{document}